\newtheorem{thm}{Theorem}[section]
\newtheorem{lem}{Lemma}[section]
\newtheorem{rmk}{Remark}[section]
\newtheorem{defn}{Definition}[section]
\newtheorem{example}{Example}[section]
\newtheorem{cor}{Corollary}[section]
\begin{document}
\pagenumbering{arabic}

\title{Floquet multipliers and the stability of periodic linear
differential equations: a unified algorithm and its computer realization
\footnote{ This paper was supported by the National Natural
Science Foundation of China under Grant (No. 11931016 and 11671176).}}
\author{Mengda Wu, Yonghui Xia\,\footnote{ Corresponding author. Yonghui Xia, xiadoc@163.com;yhxia@zjnu.cn.  } , Ziyi Xu
  \\
{\small \em College of Mathematics and Computer Science, Zhejiang Normal University, Jinhua, 321004, China}\\
{\small \em  yhxia@zjnu.cn; medawu@zjnu.edu.cn;  ziyixu@zjnu.edu.cn   }
}
\date{}
\maketitle \pagestyle{myheadings} \markboth{}{}

\noindent
\begin{abstract}
Floquet multipliers (characteristic multipliers) play significant role in the stability of the periodic equations. Based on the iterative method, we provide a unified algorithm to compute the Floquet multipliers (characteristic multipliers) and determine the stability of the periodic linear differential equations on time scales unifying discrete, continuous, and hybrid dynamics. Our approach is based on calculating the value of $\mathcal{A}$ and $\mathcal{B}$ (see Theorem \ref{thm12.111}), which are the sum and product of all Floquet multipliers (characteristic multipliers) of the system, respectively.
We obtain an explicit expression of $\mathcal{A}$ (see Theorem \ref{thm12.112}) by the method of variation and approximation theory (iterative method), and an explicit expression of $\mathcal{B}$ by Liouville's formula.
Furthermore, a computer program is designed to realize our algorithm. Specifically,  you can determine the stability of a second order periodic linear system, whether they are discrete, continuous or hybrid, as long as you enter the program codes associated with the parameters of the equation.  In fact, few literatures have dealt with the algorithm to compute the Floquet multipliers, not mention to design the program for its computer realization. Our algorithm gives the explicit expressions of all Floquet multipliers and our computer program is based on the approximations  of these explicit expressions.
In particular, on an arbitrary discrete periodic time scale, we can do a finite number of calculations to get the explicit value of Floquet multipliers %$\mathcal{A}$
(see Theorem \ref{thm12.113}). Therefore, for any discrete periodic system, we can accurately determine the stability of the system by our algorithm even without computer!  Finally, in Section 6, several examples are presented to illustrate the effectiveness of our algorithm.
\end{abstract}

{\bf Keywords:} Iterative methods; Floquet theory; Floquet multipliers; Hill equations; periodic differential equations,  stability, time scales.

{\bf MSC2020}:  34N05; 34L16; 26E70; 34L15; 65L15; 34A45; 34D20;34D08;34D05; 34E10;

\section{Introduction}

\subsection{Theory of time scales unifying the continuous and discrete calculus}

In 1988, Hilger \cite{1988} introduced the theory of time scales for the propose of unifying discrete and continuous calculus (\cite{chain,unified}). The systematic works of dynamic equations on time scales, one can refer to  Bohner and Peterson \cite{ts}, Agarwal and Bohner \cite{10181}, Agarwal et al \cite{10182}, and Bohner et al. \cite{advance}. In particular, the theory of the exponential dichotomy, reducibility, linearization,  Hyers-Ulam stability and Sturmain theory are well studied, one can refer to P\"otzche \cite{Potz1,Potz2,Potz3,Potz4} and Siegmund  \cite{10186}, Doan et al. \cite{10187,10188,10189}, Zhang et al \cite{ZhangJM1,ZhangJM2}, Reinfelds and $\breve{S}$teinberga \cite{Reinfelds}, Erbe and Peterson \cite{10184}, Erbe and Hilger \cite{10185}.
It was also generalized to the measure differential equations on time scales (Federson et al. \cite{F-JDE1,F-JDE3}), and fuzzy-valued differential equations on time scales (Wang et al. \cite{WangC3,WangC4,WangC1,WangC5,WangC6}), quaternion-valued differential equations on time scales (Li et al. \cite{WangC2}).
% or Califford-valued differential equations (Li et al. \cite{LiYK}).
Recently, DaCunha and Davis \cite{duu}, DaCunha \cite{duphd} extend the Floquet theory to a more general case of an arbitrary periodic time scale which unifies discrete, continuous, and hybrid periodic cases.
Adivar and Koyuncuo\u{g}lu \cite{20221} constructs a unified Floquet theory for homogeneous and nonhomogeneous hybrid periodic systems on domains having continuous, discrete or hybrid structure using the new periodicity concept based on shifts.

\subsection{Floquet theory and Floquet multipliers}
Floquet theory indicates that  a nonautonomous $T$-periodic linear system of differential equations can be reducible to a corresponding autonomous linear system of differential equations by a periodic Lyapunov transformation \cite{d8}.
%the case where A is a periodic matrix-valued function is reducible to the constant case (see e. g. [25, 26]).
Floquet theory is a powerful
tool to study the stability and  periodic solutions of dynamic systems. %Owing to its importance,
%Floquet theory has been extended in different directions.
%It is well known that the stability of a nonautonomous $T$-periodic linear system of differential equations can be characterized by a corresponding autonomous linear system of differential equations by a periodic Lyapunov transformation \cite{d8}. %There is no doubt that the study of periodic systems in general is important to differential equations.
Mathematicians have extended Floquet theory in different directions. We can classify the results of Floquet theory into some types: ODEs (almost Floquet systems \cite{d12}, almost-periodic systems \cite{d19}, periodic Euler-Bernoulli equations \cite{d25}, delay differential equations \cite{d29}, linear systems with meromorphic solutions \cite{d31}), PDEs (parabolic differential equations \cite{d9}, periodic evolution problems \cite{d21}), DAEs \cite{d11,d22}, integro-differential equations \cite{20222}, Volterra equations \cite{20223}, discrete dynamical systems (countable systems \cite{d30}) and systems on time scales \cite{d2}. More details for the Floquet theory and applications, one can also refer to the monograph \cite{d20} and the works \cite{d13,20225}.

Floquet multipliers (characteristic multipliers) play great role in the Floquet theory and the stability of the periodic equations. Thus, usually, to determine the stability, it suffices to calculate the characteristic multipliers. More specifically,  if all of the characteristic multipliers have modulus less than or equal to one, and if, for each characteristic multiplier with modulus equal to one, the algebraic multiplicity equals the geometric multiplicity, the system is stable,  otherwise the system is unstable. Then a natural question is how to compute the characteristic multipliers of the periodic systems. To this end, mathematicians have proposed some methods to compute the characteristic multipliers of periodic differential equations. For examples, Kotsis \cite{172} studied the approximation of the characteristic multipliers based on a theorem of Demidovi$\breve{c}$; Shi \cite{tableshi} estimated the periodic Hill equation; some very nice results were obtained for the delay differential equations (functional differential equations), see Breda, Mast and Vermiglio \cite{171}, Chow and Walther \cite{Chow}), Val'ter and  Skubachevskii \cite{162}, Skubachevskii and Walther \cite{164}), Walther \cite{169,170}, Luzyanina and Engelborghs \cite{163}, Dormayer et al. \cite{165}
 Huang and  Mallet-Paret \cite{166}, Mallet-Paret and Sell \cite{168}.

\subsection{Motivation, novelty and contributions}

There are a few works considering the Floquet theory and characteristic multipliers as mentioned above.
{\bf However, few existing literatures have dealt with the algorithm to compute the Floquet multipliers (characteristic multipliers), not mention to design the program for its computer realization.}
%Based on the close relationship between the Floquet multipliers and stability of the periodic linear equations,
In this paper, we provide a unified algorithm to compute the Floquet multipliers (characteristic multipliers) and determine the stability of the second order periodic linear equations on periodic time scales. %Motivated by the unified Floquet theory on time scales (\cite{duu,20221}),
   {We claim that the periodic system  is stable if \begin{small}$$\left|\frac{\mathcal{A}}{2}+\sqrt{(\frac{\mathcal{A}}{2})^2-\mathcal{B}}\right|<1\quad \text{and}\quad\left|\frac{\mathcal{A}}{2}-\sqrt{(\frac{\mathcal{A}}{2})^2-\mathcal{B}}\right|<1,$$\end{small}and system is unstable if\begin{small} $$\left|\frac{\mathcal{A}}{2}+\sqrt{(\frac{\mathcal{A}}{2})^2-\mathcal{B}}\right|>1\quad \text{or}\quad\left|\frac{\mathcal{A}}{2}-\sqrt{(\frac{\mathcal{A}}{2})^2-\mathcal{B}}\right|>1.$$\end{small}}
  To determine the stability of the periodic system, it is sufficient to know the modulus of characteristic multipliers, which can be derived from $\mathcal{A}$ and $\mathcal{B}$. Our main task is to calculate the value of $\mathcal{A}$ and $\mathcal{B}$  (see Theorem \ref{thm12.111}), which are the sum and product of all characteristic multipliers of the system, respectively.  We obtain an {explicit expression of $\mathcal{A}$} (see Theorem \ref{thm12.112}) by the method of variation and approximation theory (iterative method) and {an explicit expression of $\mathcal{B}$} by Liouville's formula. Finally, in Section 6, several examples are presented to illustrate the effectiveness of our algorithm. {  The illustrative examples show good performance of our computer program.}  We summarize the contributions of this paper as follows.\\
\noindent (1) Based on the iterative method, we provide {\bf a unified algorithm} to compute Floquet multipliers and determine the stability of the periodic linear differential equations on time scales
   unifying discrete, continuous, and hybrid dynamics.\\
\noindent (2){\bf A computer program is designed} to realize our algorithm. {Therefore,  you can determine the stability of a second order periodic linear system, whether they are discrete, continuous or hybrid, as long as you enter the program codes associated with the parameters of the equation.  }\\
\noindent (3) Few existing literatures have dealt with the algorithm to compute the Floquet multipliers, { not mention to design the program for its computer realization.} Our algorithm gives the {\bf explicit expressions} of all Floquet multipliers and our computer program is based on the approximations of these explicit expressions.\\
\noindent (4)  We provide {\bf an estimate of the error between $\mathcal{A}(n)$ and $\mathcal{A}$}. And a computer program  is given for calculating the value of $\mathcal{A}(n)$, $\mathcal{B}$ and $\rho(n)$, where $\mathcal{A}(n)$ is the $n$-th approximation of $\mathcal{A}$ and $\rho(n)$ is the $n$-th approximations of modulus of characteristic multipliers.\\
\noindent (5)  In particular,  {on an arbitrary discrete periodic time scale}, there is a constant $k\in\mathbb{N}$, such that $\mathcal{A}=\mathcal{A}(k)$. Consequently, we can do a finite number of calculations to get the explicit value of Floquet multipliers  (see Theorem \ref{thm12.113}). Therefore, for any discrete periodic system, we can  accurately determine the stability of the system by our algorithm even without computer!\\
\noindent (6) %Our approach is based on calculating the value of $\mathcal{A}$ and $\mathcal{B}$ (see Theorem 3.1), which are {the sum and product of all Floquet multipliers (characteristic multipliers)} of the system, respectively.
 We obtain an {\bf explicit expression of $\mathcal{A}$} (see Theorem \ref{thm12.112}) by the method of variation and approximation theory and {\bf an explicit expression of $\mathcal{B}$} by Liouville's formula.

\subsection{Outline of the paper}

     The rest of this paper is organized as follows. In Section 2, we introduce some notations and lemmas.  Section 3 gives the stability criteria for the systems we studied. Section 4 introduces the processes of getting the expression of $\mathcal{A}$. Our main results on the expression of $\mathcal{A}$ are collected in three theorems (Theorem \ref{thm12.112}--Theorem \ref{thm12.114}). In Section 5, a computer program is given. Finally, in Section 6, we give some examples to show the effectiveness of our  algorithm and verify our computer program.

\section{Preliminaries}
For completeness, we recall the following notations and concepts for the theory of time scales from \cite{ts}. A time scale $\mathbb{T}$ is a nonempty closed subset of $\mathbb{R}$. We denote $[a,b]\cap\mathbb{T}$ by $[a,b]_{\mathbb{T}}$. The forward jump operator is defined by $\sigma(t):=\inf\{s\in\mathbb{T}:s>t\}$. The backward jump operator is defined by $\rho(t):=\sup\{s\in\mathbb{T}:s<t\}$.  We put $\inf\emptyset=\sup\mathbb{T}$ and $\sup\emptyset=\inf\mathbb{T}$. A point $t\in\mathbb{T}$ is said to be right-dense if $\sigma(t)=t$, right-scattered if $\sigma(t)>t$, left-dense if $\rho(t)=t$, left-scattered if $\rho(t)<t$, isolated if $\rho(t)<t<\sigma(t)$, and dense if $\rho(t)=t=\sigma(t)$. A set $\mathbb{T}^{\kappa}$ is defined as $\mathbb{T}^{\kappa}=\mathbb{T}-\{m\}$ if $\mathbb{T}$ has a left-scattered maximum, $\mathbb{T}^{\kappa}=\mathbb{T}$ otherwise. A time scale $\mathbb{T}$ is said to be discrete if $t$ is scattered for all $t\in\mathbb{T}$, and it is said to be continuous if $t$ is dense for all $t\in\mathbb{T}$. A function $f:\mathbb{T}\rightarrow\mathbb{R}$ is called regulated provided its right-sided limits exist (finite) at all right-dense points in $\mathbb{T}$ and its left-sided limits exist (finite) at all left-dense points in $\mathbb{T}$.  A function $f:\mathbb{T}\rightarrow\mathbb{R}$ is called  rd-continuous  provided it is continuous at right-dense points in $\mathbb{T}$ and its left-sided limits exist (finite) at left-dense points in $\mathbb{T}$.  The set of rd-continuous functions $f:\mathbb{T}\rightarrow\mathbb{R}$ is denoted by $C_{rd}(\mathbb{T},\mathbb{R})$.
The graininess function $\mu$ is defined by $\mu(t):=\sigma(t)-t$.

We say that a function $p:\mathbb{T}\rightarrow\mathbb{R}$ is regressive provided $1+\mu(t)p(t)\neq 0$ holds for all $t\in\mathbb{T}^{\kappa}$. The set of all regressive and rd-continuous functions $f:\mathbb{T}\rightarrow\mathbb{R}$ is denoted by $\mathcal{R}$. The delta derivative of a function $f:\mathbb{T}\rightarrow\mathbb{R}$ at a point $t\in\mathbb{T}^\kappa$, denoted by $f^{\Delta}(t)$, is defined by $$f^{\Delta}(t)=\lim_{s\searrow \mu(t)}\frac{f(t+s)-f(t)}{s}.$$For a function  $f:\mathbb{T}\rightarrow\mathbb{R}$ we shall talk about the second derivative $f^{\Delta\Delta}$ provided $f^{\Delta}$ is differentiable on $(\mathbb{T^\kappa})^\kappa$ with $f^{\Delta\Delta}=(f^\Delta)^\Delta:(\mathbb{T}^\kappa)^\kappa\rightarrow \mathbb{R}$.
A continuous function $f:\mathbb{T}\rightarrow\mathbb{R}$ is called pre-differentiable with $D$, provided $D\subseteq\mathbb{T}^\kappa,\mathbb{T}^\kappa\backslash D $ is countable and contains no right-scattered elements of $\mathbb{T}$, and $f$ is differentiable at each $t\in D$. A pre-differentiable function $F:\mathbb{T}\rightarrow\mathbb{R}$ is called the pre-antiderivative of $f$ if $F^\Delta(t)=f(t)$ holds for all $t\in D$, where $D$ is the region of differentiation. Then we define the Cauchy integral by\begin{equation*}
	\int_r^sf(t)\Delta t=F(s)-F(r),\quad\textrm{for }r,s\in\mathbb{T},
\end{equation*}where $F$ is the pre-antiderivative of $f$.

 If $p\in\mathcal{R}$, we define the exponential function by $$e_p(t,s)=\exp\left(\int_s^t\lim\limits_{s\searrow\mu(\tau)}\frac{\mathrm{Log}(1+p(\tau)s)}{s}\Delta\tau\right)\quad \textrm{for }s,t\in\mathbb{T}.$$
One can see that $e_p(t,s)$ is a solution of the equation $x^{\Delta}=p(t)x$. The delta derivative of a vector-valued (matrix-valued) function is given by taking the derivative of each components.  The integral of a vector-valued (matrix-valued) function can be given in a similar manner.
Let $A$ be an $m\times n$-matrix-valued function on $\mathbb{T}$. We say that $A$ is rd-continuous on $\mathbb{T}$ if each entry of $A$ is rd-continuous on $\mathbb{T}$, and the class of all such rd-continuous $m\times n$-matrix-valued functions on $\mathbb{T}$ is denoted by $C_{rd}=C_{rd}(\mathbb{T},\mathbb{R}^{m\times n})$. An $n\times  n$-matrix-valued function $A$ on a time scale $\mathbb{T}$ is called regressive provided $I+\mu(t)A(t)$ is invertible for all $t\in\mathbb{T}^{\kappa}$, and the class of all such regressive and rd-continuous functions is denoted by $\mathcal{R}$.
\begin{defn}
	(\cite{ts},p.92) If $p\in C_{rd}$ and $\mu p^2\in\mathcal{R},$ then we define the trigonometric functions $\cos_p$ and $\sin_p$ by$$\cos_p=\frac{e_{ip}+e_{-ip}}{2}\quad and \quad \sin_p=\frac{e_{ip}-e_{-ip}}{2i}.$$
\end{defn}
For trigonometric functions on time scales, we have some formulas, which can be found in (\cite{ts}, Exercise 3.27).

%Dynamic Equations on Time Scales Exercise 3.27
\begin{defn}(\cite{duu})
	Let $T\in(0,\infty)$. Then the time scale $\mathbb{T}$ is {\em T-periodic} if for all $t\in\mathbb{T}$,\begin{enumerate}
		\item $t\in\mathbb{T}$ implies $t+T\in\mathbb{T}$;
		\item $\mu(t)=\mu(t+T).$
	\end{enumerate}
\end{defn}
\begin{defn}(\cite{duu})
	$A:\mathbb{T}\rightarrow\mathbb{R}^{n\times n}$ is {\em T-periodic} if $A(t)=A(t+T)$ for all $t\in\mathbb{T}$.
\end{defn}
Consider the regressive time varying linear dynamic initial value problem\begin{equation}\label{eqe1}
	x^{\Delta}(t)=A(t)x(t),\qquad x(t_0)=x_0,
\end{equation}where $A(t)$ is $T$-periodic for $t\in\mathbb{T}$ and the time scale $\mathbb{T}$ is also $T$-periodic.
\begin{defn}(\cite{duu})
	Let $x_0\in\mathbb{R}^n$ be a nonzero vector and $\Psi(t)$ be any fundamental matrix for the system $(\ref{eqe1})$. The vector solution of the system with initial condition $x(t_0)=x_0$ is given by $\Phi_{A}(t,t_0)x_0.$ The operator $M:\mathbb{R}^n\rightarrow\mathbb{R}^n$ given by $M(x_0):=\Phi_A(t_0+T,t_0)=\Psi(t_0+T)\Psi^{-1}(t_0)x_0,$
	is called a monodromy operator. The eigenvalues of the monodromy operator are called the Floquet (or characteristic) multipliers of the system $(\ref{eqe1})$.
\end{defn}
\begin{lem}\label{Flo}(\cite{duu}, Corollary~7.10)
	Consider the $p$-periodic system $(\ref{eqe1})$.\begin{enumerate}
		\item If all the Floquet multipliers have modulus less than one, then the system $(\ref{eqe1})$ is exponentially stable.
		\item If all of the Floquet multipliers have modulus less than or equal to one, and if, for each Floquet multiplier with modulus equal to one, the algebraic multiplicity equals the geometry multiplicity, then the system $(\ref{eqe1})$ is stable; otherwise the system $(\ref{eqe1})$ is unstable, growing at rates of generalized polynomials of t.
		\item If at least one Floquet multiplier has modulus greater than one, then the system $(\ref{eqe1})$ is unstable.
	\end{enumerate}
\end{lem}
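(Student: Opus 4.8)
The plan is to reduce the asymptotic behaviour of every solution of \eqref{eqe1} to that of the powers of the monodromy matrix $M=\Phi_A(t_0+T,t_0)$, and then to read off stability from the Jordan structure of $M$. The first step is to establish the periodicity identity $\Phi_A(t+T,s+T)=\Phi_A(t,s)$ by uniqueness: since the shift $t\mapsto t+T$ maps the $T$-periodic time scale $\mathbb{T}$ onto itself and preserves the graininess $\mu$, the function $\Psi(t):=\Phi_A(t+T,s+T)$ satisfies the same dynamic equation $\Psi^{\Delta}(t)=A(t+T)\Psi(t)=A(t)\Psi(t)$ with $\Psi(s)=I$, whence $\Psi=\Phi_A(\cdot,s)$. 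Iterating the cocycle property $\Phi_A(t,r)=\Phi_A(t,s)\Phi_A(s,r)$ together with this identity yields $\Phi_A(t_0+nT,t_0)=M^{n}$, and for an arbitrary $t=t_0+nT+r$ with $r$ ranging over one period it gives $\Phi_A(t,t_0)=\Phi_A(t_0+r,t_0)M^{n}$. Because $\Phi_A(t_0+r,t_0)$ is rd-continuous and invertible for $r\in[t_0,t_0+T]_{\mathbb{T}}$, it is bounded above and below there; hence the growth or decay of every solution as $t\to\infty$ is governed precisely by $\|M^{n}\|$.

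Next I would pass to the Jordan canonical form $M=PJP^{-1}$ over $\mathbb{C}$, so that $M^{n}=PJ^{n}P^{-1}$, and analyse each block $J_k(\lambda)$, whose entries under $J_k(\lambda)^{n}$ are the combinations $\binom{n}{j}\lambda^{n-j}$. If $|\lambda|<1$ for every Floquet multiplier, the spectral radius of $M$ is $<1$ and $\|M^{n}\|\to 0$ geometrically, giving exponential stability and statement (i). If some $|\lambda|>1$, the corresponding entries blow up and the solution through an associated eigenvector is unbounded, giving statement (iii). In the borderline regime where every multiplier satisfies $|\lambda|\le 1$, a unit-modulus $\lambda$ whose algebraic and geometric multiplicities agree contributes only the bounded factor $\lambda^{n}$, while the blocks with $|\lambda|<1$ decay, so $M^{n}$ stays bounded and the system is stable; if instead some unit-modulus $\lambda$ carries a Jordan block of size $k\ge 2$, the factor $\binom{n}{j}$ forces polynomial growth in $n$, giving instability. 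This settles the dichotomy in statement (ii).

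The one point requiring genuine time-scale machinery, and which I expect to be the main obstacle, is the claim in (ii) that the unstable solutions grow ``at rates of generalized polynomials of $t$.'' Translating the discrete growth $n^{k-1}$ (with $n\sim (t-t_0)/T$) into a sharp statement in $t$ is cleanest through the Floquet normal form $\Phi_A(t,t_0)=L(t)e_R(t,t_0)$, with $L$ a periodic invertible Lyapunov factor and $R$ a constant regressive matrix satisfying $M=e_R(t_0+T,t_0)$. For a single block $R=\lambda I+N$ the generalized exponential $e_R(t,t_0)$ has entries equal to the generalized monomials $h_j(t,t_0)$ times the scalar $e_{\lambda}(t,t_0)$; when $|e_{\lambda}(t_0+T,t_0)|=1$ the scalar part is bounded and the $h_j(t,t_0)$ terms dominate, which is exactly the asserted growth at the rate of generalized polynomials. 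The delicate part is the existence of the constant matrix $R$ with $M=e_R(t_0+T,t_0)$, that is, a time-scale matrix logarithm of the monodromy matrix; this may force passing to $\mathbb{C}$ and, for a real normal form, doubling the period, and it is the ingredient specific to the time-scale setting rather than a formal consequence of the Jordan decomposition.
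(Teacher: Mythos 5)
The paper does not prove this lemma at all: it is imported verbatim from DaCunha and Davis (\cite{duu}, Corollary 7.10), so there is no in-paper argument to compare against. Judged on its own, your proof is correct and is the standard one. The periodicity identity $\Phi_A(t+T,s+T)=\Phi_A(t,s)$ via uniqueness, the reduction $\Phi_A(t,t_0)=\Phi_A(t_0+r,t_0)M^{n}$ with the periodic factor bounded above and below on the compact set $[t_0,t_0+T]_{\mathbb{T}}$ (this is exactly where Lemma \ref{bound} enters), and the Jordan analysis of $M^{n}$ together settle (i), (iii), and the stable/unstable dichotomy in (ii). Where you differ from the cited source is in the order of ideas: DaCunha--Davis first construct the full Floquet decomposition $\Phi_A(t,t_0)=L(t)e_R(t,t_0)$ with a periodic Lyapunov factor $L$ and a constant regressive $R$ solving $e_R(t_0+T,t_0)=M$, and then read stability off the autonomous system $z^{\Delta}=Rz$; you instead get the stability conclusions elementarily from powers of $M$ and invoke the normal form only to justify the phrase ``growing at rates of generalized polynomials of $t$.'' Your version buys a shorter path to the stability trichotomy at the cost of deferring the genuinely time-scale-specific ingredient, the existence of the matrix logarithm $R$ (which, as you note, may require complexification or period doubling); the source's version front-loads that construction but then gets the growth rates in terms of $h_k(t,t_0)$ for free. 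For the instability assertion itself the logarithm is not needed, since $n^{k-1}$ with $n\sim(t-t_0)/T$ already gives unboundedness, so the only part of your argument that leans on unproved machinery is the precise ``generalized polynomial'' rate, and you have correctly identified it as such.
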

\begin{lem}
	\label{bound}(\cite{ts},p.23) Every regulated function on a compact interval is bounded.
\end{lem}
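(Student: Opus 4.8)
The plan is to argue by contradiction, combining the compactness of the interval with the defining property of a regulated function---namely, that it possesses finite one-sided limits at the relevant dense points. First I would suppose, contrary to the claim, that the regulated function $f$ is unbounded on the compact interval $[a,b]_{\mathbb{T}}$. Then there exists a sequence $(t_n)$ in $[a,b]_{\mathbb{T}}$ with $|f(t_n)|\to\infty$. Since $[a,b]_{\mathbb{T}}$ is a closed and bounded subset of $\mathbb{R}$, it is compact, so by the Bolzano--Weierstrass theorem I may pass to a subsequence $t_{n_k}\to t^*$ for some $t^*\in[a,b]_{\mathbb{T}}$.

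The second step is a case analysis on how $(t_{n_k})$ approaches $t^*$. If infinitely many terms equal $t^*$, then along those indices $|f(t^*)|=|f(t_{n_k})|\to\infty$, which is impossible for the fixed real number $f(t^*)$. Discarding this case, infinitely many terms differ from $t^*$, and by the pigeonhole principle infinitely many of them lie on one fixed side of $t^*$; passing to a further subsequence, I obtain a strictly monotone sequence converging to $t^*$. If it increases to $t^*$, then $t^*$ is left-dense (that is, $\rho(t^*)=t^*$), so the definition of a regulated function guarantees that the left limit $f(t^{*-})$ exists and is finite. Consequently $f(t_{n_k})\to f(t^{*-})$, which contradicts $|f(t_{n_k})|\to\infty$. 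The decreasing case is symmetric, using that $t^*$ is right-dense together with the finite right limit $f(t^{*+})$. Every case yields a contradiction, so $f$ must be bounded.

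The main obstacle, and the only place where the time-scale structure genuinely enters, is the bookkeeping in this case split: I must verify that a monotone sequence of points of $\mathbb{T}$ approaching $t^*$ from a given side forces $t^*$ to be dense on that side, so that the corresponding one-sided limit is actually supplied by the hypothesis. This is exactly why the notion of \emph{regulated} is phrased in terms of left-dense and right-dense points rather than all points. Once this correspondence between approach direction and density is pinned down, the contradiction is immediate and requires no further estimate.

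As an alternative I would record the standard approximation argument: if one adopts the characterization of regulated functions as uniform limits of step functions, then choosing a step function $\phi$ with $\sup_{[a,b]_{\mathbb{T}}}|f-\phi|<1$ and using that $\phi$ takes only finitely many values on a compact interval gives $\sup|f|\le\sup|\phi|+1<\infty$ directly, bypassing the case analysis. I would nonetheless present the compactness argument as the primary route, since it relies only on the one-sided-limit definition introduced in the Preliminaries.
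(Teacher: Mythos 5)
Your argument is correct, and it is essentially the standard proof of this fact. The paper itself supplies no proof for this lemma --- it is quoted verbatim from Bohner and Peterson \cite{ts} (Theorem 1.65 there) --- and the proof in that reference is exactly your primary route: assume unboundedness, extract a convergent subsequence by compactness of $[a,b]_{\mathbb{T}}$ (closedness of $\mathbb{T}$ guarantees the limit stays in the time scale), and derive a contradiction with the existence of the finite one-sided limit at the accumulation point, the key bookkeeping step being precisely the one you isolate: a strictly monotone sequence in $\mathbb{T}$ approaching $t^*$ from one side forces $t^*$ to be dense on that side, so the regulated hypothesis applies. Your alternative via uniform approximation by step functions is also valid but requires the characterization of regulated functions as uniform limits of step functions, which is a heavier prerequisite than the one-sided-limit definition used in the Preliminaries; the compactness argument is the right choice here.
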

\begin{lem}
	\label{yizhi}Assume that $D$ is a compact subset of $\mathbb{R}$ and $f_n\in C_{rd}(D,\mathbb{R})$ for each $n\in\mathbb{N}$. If $\{f_n\}$ uniformly converges to $f$ on $D$, then f is rd-continuous and$$\int_a^bf(t)\Delta t=\lim\limits_{n\rightarrow\infty}\int_a^bf_n(t)\Delta t.$$where $a,b\in D$.
\end{lem}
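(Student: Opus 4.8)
The statement is the time-scale analogue of the classical theorem that a uniform limit of continuous functions is continuous and that uniform convergence permits interchanging limit and integral. The plan is to prove it in two stages: first that the limit $f$ is rd-continuous, and then that the $\Delta$-integrals converge.

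For the first stage, recall that rd-continuity on $D$ means continuity at every right-dense point together with the existence of finite left-sided limits at every left-dense point. I would establish both properties by the standard $\varepsilon/3$ argument. Fix $\varepsilon>0$. By uniform convergence choose $N$ with $\sup_{t\in D}|f_n(t)-f(t)|<\varepsilon/3$ for all $n\ge N$. At a right-dense point $t_0$, continuity of $f_N$ gives $\delta>0$ such that $|f_N(t)-f_N(t_0)|<\varepsilon/3$ whenever $|t-t_0|<\delta$ and $t\in D$; then
$$|f(t)-f(t_0)|\le|f(t)-f_N(t)|+|f_N(t)-f_N(t_0)|+|f_N(t_0)-f(t_0)|<\varepsilon,$$
so $f$ is continuous at $t_0$. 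At a left-dense point $t_0$, set $L_n=\lim_{t\to t_0^-}f_n(t)$; the bound $|L_n-L_m|\le\sup_{t\in D}|f_n(t)-f_m(t)|$ shows $(L_n)$ is Cauchy, and the same three-term splitting shows that $\lim_{t\to t_0^-}f(t)$ exists and equals $\lim_n L_n$. Together these give $f\in C_{rd}(D,\mathbb{R})$.

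For the second stage, since $f$ is rd-continuous on the compact set $D$ it is regulated, hence bounded by Lemma~\ref{bound}, and in particular it is $\Delta$-integrable, so $\int_a^b f(t)\,\Delta t$ is well defined. Using linearity and the triangle inequality for the $\Delta$-integral together with $\int_a^b 1\,\Delta t=b-a$, I would estimate, for $a\le b$,
$$\left|\int_a^b f_n(t)\,\Delta t-\int_a^b f(t)\,\Delta t\right|\le\int_a^b|f_n(t)-f(t)|\,\Delta t\le\Big(\sup_{t\in D}|f_n(t)-f(t)|\Big)(b-a).$$
The right-hand side tends to $0$ as $n\to\infty$ by uniform convergence, which yields the claimed identity; the case $b<a$ follows by reversing the limits of integration.

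The routine parts are the two $\varepsilon/3$ estimates and the final integral bound. The step I expect to be the main obstacle is the left-dense case in the first stage: one must verify that uniform convergence transmits the existence of finite left-sided limits to $f$, which is precisely where the time-scale notion of rd-continuity (rather than ordinary continuity) enters. Once rd-continuity of $f$ is secured, its $\Delta$-integrability and the convergence of the integrals follow immediately from the standard properties of the $\Delta$-integral.
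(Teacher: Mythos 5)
Your proof is correct: the $\varepsilon/3$ argument establishes continuity of $f$ at right-dense points and the existence of finite left-sided limits at left-dense points (via the Cauchy sequence $(L_n)$ of left limits), and the bound $\bigl|\int_a^b f_n\,\Delta t-\int_a^b f\,\Delta t\bigr|\le\sup_{t\in D}|f_n(t)-f(t)|\,(b-a)$ gives the interchange of limit and $\Delta$-integral. The paper states Lemma~\ref{yizhi} without any proof, treating it as a standard fact, so there is nothing to compare against; your argument is exactly the expected one, with the only cosmetic remark being that $\Delta$-integrability of $f$ follows from its rd-continuity (which guarantees an antiderivative) rather than from boundedness per se.
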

\begin{lem}
\label{budeng}
Let $\mathbb{T}$ be an arbitrary time scale. Suppose $f:[a,b]\rightarrow\mathbb{R}$ is an increasing function, where $a,b\in\mathbb{T}$ (b may be $\infty$). If f is rd-continuous when it is restricted on $[a,b]_{\mathbb{T}}$, then we have $$\int_a^bf(s)\mathrm{d}s\geq\int_a^bf(s)\Delta s.$$
\end{lem}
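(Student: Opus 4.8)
The inequality is driven entirely by the right-scattered points. At such a point $t$ the $\Delta$-integral charges the gap $[t,\sigma(t)]$ only the rectangle $\mu(t)f(t)$, whereas the ordinary integral charges $\int_t^{\sigma(t)}f(s)\,\mathrm{d}s$; as $f$ is increasing we have $f(s)\ge f(t)$ on $[t,\sigma(t)]$, so $\int_t^{\sigma(t)}f(s)\,\mathrm{d}s\ge\mu(t)f(t)$, while on the right-dense part the two integrals agree. One therefore expects the identity
\[
\int_a^b f(s)\,\mathrm{d}s-\int_a^b f(s)\,\Delta s=\sum_{t\,\text{right-scattered}}\Big(\int_t^{\sigma(t)}f(s)\,\mathrm{d}s-\mu(t)f(t)\Big),
\]
in which every summand is nonnegative. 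Since the right-scattered points may be infinite in number and $b$ may be $+\infty$, rather than justify this sum term by term I would realize the same comparison uniformly through Riemann sums.

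First reduce to finite $b$: once the inequality is known on $[a,c]$ for every finite $c\in\mathbb{T}$, it passes to $c\to\infty$, because both $\int_a^{\,\cdot}f(s)\,\mathrm{d}s$ and $\int_a^{\,\cdot}f(s)\,\Delta s$ are defined as limits of their values at finite endpoints and a non-strict inequality survives a limit. So fix finite $a,b\in\mathbb{T}$. Being monotone on the real interval $[a,b]$, $f$ is Riemann integrable there (and bounded, by Lemma~\ref{bound}); being rd-continuous on $[a,b]_{\mathbb{T}}$, it is $\Delta$-integrable. Take any partition $a=t_0<t_1<\cdots<t_n=b$ with all $t_i\in\mathbb{T}$, and tag each subinterval by its left endpoint $t_{i-1}$; this is an admissible tagging for the $\Delta$-Riemann sum, and on a gap subinterval $[t_{i-1},\sigma(t_{i-1})]$ the left endpoint is in fact the only admissible tag. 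The associated $\Delta$-Riemann sum is
\[
S(P)=\sum_{i=1}^{n}f(t_{i-1})\,(t_i-t_{i-1}).
\]
Since $f$ is increasing, $f(t_{i-1})=\inf_{[t_{i-1},t_i]}f$, so $S(P)$ is exactly the lower Darboux sum of $f$ for the ordinary integral; hence $S(P)\le\int_a^b f(s)\,\mathrm{d}s$ for every such $P$. Letting $P$ range over $\delta$-fine partitions, the left-tagged $\Delta$-Riemann sums converge to $\int_a^b f(s)\,\Delta s$, and passing to the limit in $S(P)\le\int_a^b f(s)\,\mathrm{d}s$ yields the claim.

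The main obstacle is the limit step: I must invoke that for a $\Delta$-integrable $f$ the Riemann sums over $\delta$-fine tagged partitions converge to $\int_a^b f(s)\,\Delta s$ independently of the admissible choice of tags, so that the left-endpoint tags are legitimate. This forces me to respect the time-scale partition conventions at right-scattered points, where the subinterval length equals $t_i-t_{i-1}=\mu(t_{i-1})$ and the tag is pinned to $t_{i-1}$. The remaining ingredients are routine: monotonicity simultaneously yields Riemann integrability of $f$ and the identification of $S(P)$ with a lower Darboux sum, and the $c\to\infty$ passage is immediate.
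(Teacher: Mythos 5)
Your argument is correct, but it follows a genuinely different route from the paper's. The paper proves the lemma by the time-scale induction principle: it fixes $\varepsilon>0$, shows that the statement $S(t):\ \int_a^t\bigl(f(s)+\varepsilon\bigr)\,\mathrm{d}s-\int_a^t f(s)\,\Delta s\ge 0$ propagates from $a$ through right-scattered points (where the gap contributes at least $\mu(t)\varepsilon$), right-dense points (where rd-continuity controls $f$ to within $\varepsilon/2$ on a small right neighborhood), and left-dense points (by continuity of the difference in $t$), and then lets $\varepsilon\to 0^+$. The $\varepsilon$-slack is exactly what makes the induction close at dense points, and the only external input is the induction principle from Bohner--Peterson. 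Your proof instead compares the two integrals at the level of Riemann sums: a left-tagged $\Delta$-Riemann sum over a partition with nodes in $\mathbb{T}$ is, for increasing $f$, precisely a lower Darboux sum of the ordinary integral, so every such sum is $\le\int_a^b f(s)\,\mathrm{d}s$, and passing to the limit over $\delta$-fine partitions gives the claim. This is conceptually cleaner and makes the source of the inequality (undercharging the gaps) transparent, but it buys that clarity by invoking the Riemann $\Delta$-integral machinery of Guseinov and Bohner--Guseinov — in particular that rd-continuous functions are Riemann $\Delta$-integrable, that this integral coincides with the antiderivative-based $\Delta$-integral used in the paper, and that the convergence of $\delta$-fine Riemann sums is uniform over admissible tags (so that left-endpoint tags, forced at gap subintervals, are legitimate). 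That equivalence is standard but is a heavier citation than the induction principle the authors rely on; with it in place, your reduction of the case $b=\infty$ to finite endpoints and the monotonicity bookkeeping are all sound.
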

\begin{proof}
Note that $f$ is an increasing function on $[a,b]$, hence $f$ is integrable on $[a,b]$. Let $\varepsilon>0$. We now show by induction that $$S(t):\quad \int_a^tf(s)+\varepsilon \mathrm{d}s-\int_a^tf(s)\Delta s\geq 0$$
	holds for all $t\in[a,b]_{\mathbb{T}}$.\begin{enumerate}
		\item The statement $S(a)$ is trivially satisfied.
		\item Let t be right-scattered and assume that $S(t)$ holds. Then we have\begin{equation*}
			\begin{split}
				&\int_a^{\sigma(t)}f(s)+\varepsilon \mathrm{d}s-\int_a^{\sigma(t)}f(s)\Delta s\\\geq & \int_t^{\sigma(t)}f(s)+\varepsilon \mathrm{d}s-\int_t^{\sigma(t)}f(s)\Delta s
				\geq \int_t^{\sigma(t)}f(t)+\varepsilon \mathrm{d}s-\mu(t)f(t)=\mu(t)\varepsilon>0.
							\end{split}
		\end{equation*}Therefore $S(\sigma(t))$ holds.
		\item Assume that S(t) holds and $t\neq a$ is right-dense. Since $f(t)\in C_{rd}([a,b]_{\mathbb{T}},\mathbb{R})$, $f(t)$ is continuous (on $\mathbb{T}$) at $t$. Then there exists $\delta=\delta(\varepsilon,t)$, such that $|f(s)-f(t)|\leq\varepsilon/2$ holds for all $s\in(t-\delta,t+\delta)_{\mathbb{T}}$. Hence we have for all $\tau\in(t,t+\delta)_{\mathbb{T}}$,\begin{equation*}
			\begin{split}
				&\int_a^{\tau}f(s)+\varepsilon \mathrm{d}s-\int_a^{\tau}f(s)\Delta s\\\geq & \int_t^{\tau}f(s)+\varepsilon \mathrm{d}s-\int_t^{\tau}f(s)\Delta s
				\geq  (\tau-t)(\varepsilon+f(t)-f(\tau))\geq \frac{\varepsilon(\tau-t)}{2}>0.
							\end{split}
		\end{equation*}Therefore $S(\tau)$ holds for all $\tau\in(t,t+\delta)_{\mathbb{T}}$.
		\item Now let $t$ be left-dense and suppose $S(\tau)$ is true for all $\tau\in[a,t)_{\mathbb{T}}$, then $S(t)$ holds since the function $$F(t,\varepsilon):=\int_a^tf(s)+\varepsilon \mathrm{d}s-\int_a^tf(s)\Delta s$$ is continuous (on $\mathbb{T}$) with respect to $t$.
	\end{enumerate}
	By induction principle (\cite{ts},p.4), $S(b)$ is true (i.e. $F(b,\varepsilon)\geq 0$). Moreover, it can be seen that $F(b,\varepsilon)$ is continuous with respect to $\varepsilon$, then $F(b,0)=\lim\limits_{\varepsilon\rightarrow 0^+}F(b,\varepsilon)\geq 0$. The proof is completed.
\end{proof}
\begin{cor}
Let $\mathbb{T}$ be an arbitrary time scale. Suppose $f:[a,b]\rightarrow\mathbb{R}$ is a decreasing function, where $a,b\in\mathbb{T}$ (b may be $\infty$). If f is rd-continuous when it is restricted on $[a,b]_{\mathbb{T}}$,  then we have $$\int_a^bf(s)\mathrm{d}s\leq\int_a^bf(s)\Delta s.$$
\end{cor}
\begin{cor}\label{cor624}
	Let $\mathbb{T}$ be an arbitrary time scale and $c$ be an arbitrary nonnegative constant. Then we have $$ \int_a^b\int_a^{t_1}\cdots\int_a^{t_{n-1}}c~\Delta t_n\cdots\Delta t_1\leq \frac{c(b-a)^n}{n!},$$ where $a,b\in\mathbb{T},a\leq t_{n-1}\leq\cdots\leq t_1\leq b.$
\end{cor}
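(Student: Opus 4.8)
The plan is to prove Corollary \ref{cor624} by iterating the inequality from Lemma \ref{budeng}, applying it one integration variable at a time while peeling off the outermost delta-integral and comparing it to the corresponding Lebesgue integral. First I would observe that the innermost object, the constant $c$, is trivially an increasing (indeed constant) function, so integrating it against $\Delta t_n$ over $[a,t_{n-1}]_{\mathbb{T}}$ and bounding the result is the base case. The key mechanism is that each partial integral $g_k(t):=\int_a^t\int_a^{t_1}\cdots\,c\,\Delta t_{\bullet}\cdots\Delta t_{1}$ is a nonnegative, nondecreasing, rd-continuous function of its upper limit $t$, so Lemma \ref{budeng} applies to replace its next outer $\Delta$-integral by the ordinary Lebesgue integral at the cost of an inequality in the correct direction.

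The key steps, in order, would be as follows. I would define $g_0\equiv c$ and inductively $g_{k}(t):=\int_a^t g_{k-1}(s)\,\Delta s$ for $k=1,\dots,n$, so that the left-hand side of the claimed inequality is exactly $g_n(b)$. I would then prove by induction on $k$ the comparison
\begin{equation*}
g_k(b)\;\le\;\int_a^b h_{k-1}(s)\,\mathrm{d}s,\qquad\text{where}\quad h_k(t):=\frac{c(t-a)^k}{k!},
\end{equation*}
using at each stage that $g_{k-1}$ is increasing and rd-continuous, so that Lemma \ref{budeng} gives $\int_a^b g_{k-1}(s)\,\Delta s\le\int_a^b g_{k-1}(s)\,\mathrm{d}s$, followed by the inductive bound $g_{k-1}(s)\le h_{k-1}(s)$ propagated under the monotone Lebesgue integral $\int_a^{\cdot}$. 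Carrying the Lebesgue integrals through explicitly gives $\int_a^b h_{k-1}(s)\,\mathrm{d}s=\frac{c(b-a)^k}{k!}=h_k(b)$, closing the induction and yielding $g_n(b)\le\frac{c(b-a)^n}{n!}$.

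The point requiring care is verifying the two hypotheses of Lemma \ref{budeng} at every level, namely that each $g_{k-1}$ is genuinely increasing on $[a,b]$ and rd-continuous on $[a,b]_{\mathbb{T}}$. Monotonicity follows because $g_{k-1}$ is a $\Delta$-integral of a nonnegative rd-continuous integrand, hence has nonnegative $\Delta$-derivative; and rd-continuity of a $\Delta$-antiderivative is standard on time scales. I expect the main (though modest) obstacle to be bookkeeping the direction of the inequalities consistently: Lemma \ref{budeng} trades a $\Delta$-integral for a larger Lebesgue integral only for increasing integrands, so I must ensure the monotone bound $g_{k-1}\le h_{k-1}$ and the Lemma are combined in the order that preserves the upper estimate, and that the inequality is applied to the outermost variable at each step rather than prematurely replacing inner $\Delta$-integrals. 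Once the monotonicity and rd-continuity at each level are confirmed, the induction is routine and the factorial denominator emerges from the elementary Lebesgue computation $\int_a^b (s-a)^{k-1}\,\mathrm{d}s=(b-a)^k/k$.
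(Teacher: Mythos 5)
Your proposal is correct and follows essentially the same route as the paper: an inside-out induction that peels off one integration variable at a time, bounds the inner iterated $\Delta$-integral by the polynomial $\frac{c(t-a)^k}{k!}$, and advances the induction via Lemma \ref{budeng} together with the elementary computation $\int_a^b(s-a)^k\,\mathrm{d}s=(b-a)^{k+1}/(k+1)$. The only (harmless) difference is the order of the two inequalities within each inductive step: the paper first bounds the integrand by the polynomial inside the $\Delta$-integral and only then applies Lemma \ref{budeng} to that explicit increasing function defined on all of $[a,b]$, which sidesteps your need to extend the iterated integral $g_{k-1}$ off the time scale (and to check the pointwise bound for that extension) before comparing its $\Delta$-integral with a Lebesgue integral.
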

\begin{proof}
	Let $b=t_0$. We now show by induction that $$S(k):\quad \int_a^{t_{n-k}}\cdots\int_a^{t_{n-1}}c~\Delta t_n\cdots\Delta t_{n-k+1}\leq \frac{c(t_{n-k}-a)^k}{k!}$$
	holds for all $k\in\{1,2,\ldots,n\}$\begin{enumerate}
		\item Clearly, $S(1)$ holds.
		\item Now suppose $k\leq n-1$ and that $S(k)$ holds. Then\begin{equation*}
			\begin{split}
				&\int_a^{t_{n-(k+1)}}\cdots\int_a^{t_{n-1}}c~\Delta t_n\cdots\Delta t_{n-k}\\
				\leq & \int_a^{t_{n-(k+1)}}\frac{c(t_{n-k}-a)^k}{k!}\Delta t_{n-k}
				\leq  \int_a^{t_{n-(k+1)}}\frac{c(t_{n-k}-a)^k}{k!}\mathrm{d} t_{n-k}=\frac{c(t_{n-(k+1)}-a)^{k+1}}{(k+1)!}.
			\end{split}
		\end{equation*}Thus, $S(k+1)$ holds.
	\end{enumerate}By induction principle, the proof is completed.
\end{proof}
\begin{cor}
	Let $\mathbb{T}$ be an arbitrary time scale and $c$ be an arbitrary nonpositive constant. Then we have $$ \int_a^b\int_a^{t_1}\cdots\int_a^{t_{n-1}}c~\Delta t_n\cdots\Delta t_1\geq \frac{c(b-a)^n}{n!},$$ where $a,b\in\mathbb{T},a\leq t_{n-1}\leq\cdots\leq t_1\leq b.$
\end{cor}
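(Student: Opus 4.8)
The plan is to deduce this corollary directly from the nonnegative case already established in Corollary~\ref{cor624}, exploiting the homogeneity of the iterated $\Delta$-integral. Since $c\le 0$, the constant $-c$ is nonnegative, so Corollary~\ref{cor624} applies to it and yields
$$\int_a^b\int_a^{t_1}\cdots\int_a^{t_{n-1}}(-c)~\Delta t_n\cdots\Delta t_1\leq \frac{(-c)(b-a)^n}{n!}.$$
First I would pull the scalar $-1$ out of every layer of the multiple integral---this is just the linearity of the Delta-integral applied $n$ times---so that the left-hand side equals $-\int_a^b\cdots\int_a^{t_{n-1}}c~\Delta t_n\cdots\Delta t_1$, while the right-hand side is $-\frac{c(b-a)^n}{n!}$. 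Multiplying the resulting inequality through by $-1$ reverses its direction and gives exactly the claimed bound. In this route there is essentially no obstacle: the only point that needs care is confirming that scalar homogeneity genuinely passes through each nested integral, which is immediate from the linearity of the time-scale integral.

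Alternatively, and more in the spirit of the proof of Corollary~\ref{cor624}, I would repeat the same induction on the number of iterated integrals but with the inequality reversed, replacing the appeal to Lemma~\ref{budeng} by its decreasing counterpart (the corollary immediately following Lemma~\ref{budeng}). Concretely, I would prove by induction on $k$ the statement
$$S(k):\quad \int_a^{t_{n-k}}\cdots\int_a^{t_{n-1}}c~\Delta t_n\cdots\Delta t_{n-k+1}\geq \frac{c(t_{n-k}-a)^k}{k!},$$
with $S(1)$ holding by direct evaluation (in fact with equality). The inductive step uses the monotonicity of the $\Delta$-integral together with the observation that, because $c\le 0$, the map $t\mapsto \frac{c(t-a)^k}{k!}$ is \emph{decreasing}; the decreasing analogue of Lemma~\ref{budeng} then reverses the comparison between the $\Delta$-integral and the ordinary integral, so that $\int_a^{t_{n-(k+1)}}\frac{c(t_{n-k}-a)^k}{k!}\Delta t_{n-k}\geq \int_a^{t_{n-(k+1)}}\frac{c(t_{n-k}-a)^k}{k!}\mathrm{d}t_{n-k}=\frac{c(t_{n-(k+1)}-a)^{k+1}}{(k+1)!}$. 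Setting $t_0=b$ and taking $k=n$ then yields the corollary.

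The main thing to watch in this second route is the direction of every inequality---each comparison flips relative to the nonnegative case, and the sign of $c$ must be tracked consistently through the monotonicity argument---so the cleaner and less error-prone plan is the negation reduction described first. I would therefore present the one-line reduction to Corollary~\ref{cor624} as the proof, since it sidesteps re-running the induction entirely and isolates the single fact being used, namely homogeneity of the integral.
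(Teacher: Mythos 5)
Your proposal is correct. The paper states this corollary without proof, and your primary route --- applying Corollary~\ref{cor624} to the nonnegative constant $-c$, pulling the factor $-1$ through each nested integral by linearity of the $\Delta$-integral, and then multiplying by $-1$ to reverse the inequality --- is the cleanest way to fill that gap; your alternative of re-running the induction with the decreasing counterpart of Lemma~\ref{budeng} would also work but is unnecessary given the one-line reduction.
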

\section{Stability Criteria}
Now we start our main work. Let $\mathbb{T}$ be a $T$-periodic time scale and unbounded above. Consider the stability of the regressive time varying linear dynamic system  \begin{equation}\label{a1}
	x^{\Delta\Delta}+p(t)x^{\Delta}+q(t)x=0,
\end{equation}where $p(t+T)=p(t),~q(t+T)=q(t),~p(t),q(t)\in C_{rd}(\mathbb{T},\mathbb{R}),~1-\mu(t)p(t)+\mu^2(t)q(t)\neq0,~q(t)\neq 0$ for all $t\in\mathbb{T}$. We assume that $ q(t)>0$ if $t$ is right-dense, and the equation \begin{equation}\label{phi}
	x^{\sigma}x=q(t)\end{equation}has a  solution $\phi(t)\in C_{rd}^1(\mathbb{T,R})$, where $x^\sigma$ denotes $x(\sigma(t))$.
\begin{rmk}
	The assumption that Eq. $(\ref{phi})$ exists a  solution $\phi(t)\in C_{rd}^1(\mathbb{T,R})$ can be satisfied for some time scales, such as  discrete time scales, continuous time scales and the combination of them.
\end{rmk}

Note that Eq. $(\ref{a1})$ can be written in the form\begin{equation}\label{a2}
	\left(\begin{array}{c}
		x^{\Delta}\\
		y^{\Delta}
	\end{array}\right)=\left(\begin{array}{cc}
		0 & 1\\
		-q(t) & -p(t)
	\end{array}\right)\left(\begin{array}{c}
		x\\
		y
	\end{array}\right).	\end{equation}
	We assume that $S(t)=\left(\!\!\begin{array}{cc}
		0 & 1\\
		-q(t) & -p(t)
	\end{array}\!\!\right)$ and $Y(t)=\left(\!\!\begin{array}{cc}
		x(t) & \bar{x}(t)\\
		y(t) & \bar{y}(t)
	\end{array}\!\!\right)=\Phi_S(t,t_0)$, then the eigenvalues of $Y(t_0+T)$ are the characteristic multipliers of $(\ref{a2})$. It can be seen that$$\det Y(t_0+T)=e_{-p+\mu q}(t_0+T,t_0)\det Y(t_0)=e_{-p+\mu q}(t_0+T,t_0).$$
Let $\rho_1,\rho_2$ denote the characteristic multipliers of $(\ref{a2})$ and\begin{equation}
	\label{eqdab}\begin{split}\mathcal{A}&=x(t_0+T)+\bar{y}(t_0+T),\\
		\mathcal{B}&=e_{-p+\mu q}(t_0+T,t_0).
	\end{split}
\end{equation} Hence $\rho_1,\rho_2$ satisfy
$$\rho^2-\mathcal{A}\rho+\mathcal{B}=0.$$
Obviously, \begin{equation}
	\label{eq118}\rho_{1,2}=\frac{\mathcal{A}}{2}\pm\sqrt{(\frac{\mathcal{A}}{2})^2-\mathcal{B}}.
\end{equation}
Note that the value of $\mathcal{B}$ can be easily calculated, then if we can get the value of $\mathcal{A}$, the stability of system  $(\ref{a1})$ can be studied by Lemma $\ref{Flo}$.
\begin{thm}\label{thm12.111}
	We claim that system  $(\ref{a1})$ is stable if $$\left|\frac{\mathcal{A}}{2}+\sqrt{(\frac{\mathcal{A}}{2})^2-\mathcal{B}}\right|<1\quad \text{and}\quad\left|\frac{\mathcal{A}}{2}-\sqrt{(\frac{\mathcal{A}}{2})^2-\mathcal{B}}\right|<1,$$and system  $(\ref{a1})$ is unstable if $$\left|\frac{\mathcal{A}}{2}+\sqrt{(\frac{\mathcal{A}}{2})^2-\mathcal{B}}\right|>1\quad \text{or}\quad\left|\frac{\mathcal{A}}{2}-\sqrt{(\frac{\mathcal{A}}{2})^2-\mathcal{B}}\right|>1.$$
\end{thm}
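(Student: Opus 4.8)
The plan is to read off the two Floquet multipliers directly from the explicit formula $(\ref{eq118})$ and then invoke the stability dichotomy of Lemma \ref{Flo}. Recall from the construction preceding the theorem that $Y(t_0+T)$ is the monodromy matrix of the equivalent first-order system $(\ref{a2})$, so its two eigenvalues are precisely the characteristic multipliers $\rho_1,\rho_2$ of $(\ref{a1})$. Since $Y(t_0+T)$ is $2\times 2$ with trace $\mathcal{A}$ and determinant $\mathcal{B}$, its characteristic polynomial is $\rho^2-\mathcal{A}\rho+\mathcal{B}=0$, and $(\ref{eq118})$ gives $\rho_1=\frac{\mathcal{A}}{2}+\sqrt{(\frac{\mathcal{A}}{2})^2-\mathcal{B}}$ and $\rho_2=\frac{\mathcal{A}}{2}-\sqrt{(\frac{\mathcal{A}}{2})^2-\mathcal{B}}$. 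Hence the two quantities appearing in the statement are exactly $|\rho_1|$ and $|\rho_2|$, and because the equation is second order these exhaust all of its Floquet multipliers.

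For the stability assertion I would observe that if both $|\rho_1|<1$ and $|\rho_2|<1$, then every Floquet multiplier has modulus strictly less than one, so Lemma \ref{Flo}(i) applies and yields exponential stability of $(\ref{a1})$; in particular the system is stable. For the instability assertion, if instead $|\rho_1|>1$ or $|\rho_2|>1$, then at least one Floquet multiplier has modulus greater than one, and Lemma \ref{Flo}(iii) immediately gives instability.

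Since the theorem asserts only these two sufficient conditions and is silent on the borderline case in which some multiplier has modulus exactly one, no further analysis is needed: the result is a direct corollary of $(\ref{eq118})$ together with parts (i) and (iii) of Lemma \ref{Flo}. I therefore expect no genuine obstacle in the argument. The only point deserving care is the bookkeeping that the two displayed expressions really constitute the complete list of Floquet multipliers, which holds precisely because the monodromy matrix of a second-order equation is $2\times 2$, so that $\rho_1,\rho_2$ account for all its eigenvalues regardless of whether the discriminant $(\frac{\mathcal{A}}{2})^2-\mathcal{B}$ is positive, zero, or negative.
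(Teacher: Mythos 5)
Your argument is correct and matches the paper's own (implicit) reasoning exactly: the paper derives $\rho_{1,2}=\frac{\mathcal{A}}{2}\pm\sqrt{(\frac{\mathcal{A}}{2})^2-\mathcal{B}}$ from the trace--determinant characteristic polynomial of the monodromy matrix $Y(t_0+T)$ in the discussion preceding the theorem and then appeals to parts (i) and (iii) of Lemma \ref{Flo}. Nothing is missing.
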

\begin{thm}
\label{rmkab2}
	Assume that $\mathcal{B}=1$. Then we have\begin{enumerate}
		\item if $|\mathcal{A}|<2$, system $(\ref{a1})$ is stable;
		\item if   $|\mathcal{A}|>2$, system $(\ref{a1})$ is unstable.
	\end{enumerate}
\end{thm}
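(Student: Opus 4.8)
The plan is to read off the two characteristic multipliers directly from the closed form $(\ref{eq118})$ after setting $\mathcal{B}=1$, and then feed their moduli into the stability dichotomy already recorded in Lemma \ref{Flo}. The starting observation is that $\mathcal{A}=x(t_0+T)+\bar{y}(t_0+T)$ is the trace of the real matrix $Y(t_0+T)$, hence $\mathcal{A}\in\mathbb{R}$; with $\mathcal{B}=1$ the defining quadratic becomes $\rho^2-\mathcal{A}\rho+1=0$, so that $\rho_1\rho_2=1$ and $\rho_{1,2}=\frac{\mathcal{A}}{2}\pm\sqrt{(\frac{\mathcal{A}}{2})^2-1}$. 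Everything then reduces to determining where these two numbers lie relative to the unit circle.

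For part (i), suppose $|\mathcal{A}|<2$. Then $(\frac{\mathcal{A}}{2})^2-1<0$, the discriminant is strictly negative, and the two multipliers form a pair of distinct complex conjugates $\rho_{1,2}=\frac{\mathcal{A}}{2}\pm i\sqrt{1-(\frac{\mathcal{A}}{2})^2}$. A one-line computation gives $|\rho_{1,2}|^2=(\frac{\mathcal{A}}{2})^2+(1-(\frac{\mathcal{A}}{2})^2)=1$, so both multipliers sit exactly on the unit circle. This is precisely the borderline situation that the strict criterion of Theorem \ref{thm12.111} does not settle, so I would instead invoke Lemma \ref{Flo}(ii). Because the discriminant is strictly negative, the two eigenvalues of the monodromy operator are distinct; the operator is therefore diagonalizable, and for each multiplier the algebraic multiplicity equals the geometric multiplicity. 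Lemma \ref{Flo}(ii) then yields stability.

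For part (ii), suppose $|\mathcal{A}|>2$. Now $(\frac{\mathcal{A}}{2})^2-1>0$, the discriminant is strictly positive, and the multipliers are real and distinct with product $\rho_1\rho_2=1$. Two distinct real numbers whose product equals one cannot both lie in $[-1,1]$, so at least one of them has modulus exceeding one; concretely, if $\mathcal{A}>2$ then $\rho_1=\frac{\mathcal{A}}{2}+\sqrt{(\frac{\mathcal{A}}{2})^2-1}>\frac{\mathcal{A}}{2}>1$, while if $\mathcal{A}<-2$ then $\rho_2=\frac{\mathcal{A}}{2}-\sqrt{(\frac{\mathcal{A}}{2})^2-1}<\frac{\mathcal{A}}{2}<-1$. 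In either case a multiplier of modulus greater than one is present, so instability follows at once from Lemma \ref{Flo}(iii), equivalently from the instability half of Theorem \ref{thm12.111}.

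The only delicate point is the boundary handled in part (i): since both moduli equal one exactly, Theorem \ref{thm12.111} is silent, and the conclusion rests entirely on the diagonalizability of the monodromy operator. That diagonalizability is guaranteed here solely by the \emph{strictness} of $|\mathcal{A}|<2$, which forces the two eigenvalues to be distinct. Were $|\mathcal{A}|=2$ permitted, the two multipliers would coalesce to $\pm1$ and a nontrivial Jordan block could appear, defeating the multiplicity hypothesis of Lemma \ref{Flo}(ii); this is exactly why the case $|\mathcal{A}|=2$ is absent from the statement.
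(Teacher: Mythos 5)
Your proposal is correct and follows essentially the same route as the paper: the paper's own (very terse) proof likewise observes that for $|\mathcal{A}|<2$ and $\mathcal{B}=1$ the two multipliers satisfy $|\rho_1|=|\rho_2|=1$ with $\rho_1\neq\rho_2$, and concludes stability, treating (ii) as similar. Your version simply makes explicit the intermediate steps the paper leaves implicit — the reality of $\mathcal{A}$, the modulus computation, the diagonalizability argument feeding Lemma \ref{Flo}(ii), and the explicit multiplier of modulus greater than one in case (ii).
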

\begin{proof}
	It follows from $(\ref{eq118})$ that $|\rho_1|=|\rho_2|=1$ and $\rho_1\neq\rho_2$ as $|\mathcal{A}|<2$, $\mathcal{B}=1$, which implies that  system $(\ref{a1})$ is stable.  The proof of (ii) is similar.
\end{proof}
	\begin{rmk}
		If $\mathbb{T}=\mathbb{R}$, system $(\ref{a1})$ reduces to $x''+p(t)x'+q(t)x=0$. If $\mathbb{T}=\mathbb{Z}$, system $(\ref{a1})$ reduces to $\Delta\Delta x+p(t)\Delta x+q(t)x=0$. In fact, the explicit expression of $\mathcal{A}$ is important to study the stability of the system. Thus, the next section is devoted to presenting an algorithm for the expression of $\mathcal{A}.$
	\end{rmk}
\section{Algorithm for the Expression of  $\mathcal{A}$}
In this section, we are going to focus on the algorithm  for $\mathcal{A}$. Note that system $(\ref{a2})$ can be written as
\begin{equation}\label{eq3.1}
	\left(\begin{matrix}
		x^{\Delta}\\
		y^{\Delta}
	\end{matrix}\right)=\left(\begin{matrix}
		0 & 1\\&\\
		-q(t) & \dfrac{\phi^{\Delta}(t)}{\phi(t)}
	\end{matrix}\right)\left(\begin{matrix}
		x\\
		y
	\end{matrix}\right)+\left(\begin{matrix}
		0\\&\\
		(-p(t)-\dfrac{\phi^{\Delta}(t)}{\phi(t)})y
	\end{matrix}\right).
\end{equation}
Let \begin{equation}\label{eq3.2}
	h(t)=-p(t)-\frac{\phi^{\Delta}(t)}{\phi(t)},
	\end{equation}
thus Eq. $(\ref{eq3.1})$ can be rewritten as \begin{equation}\label{eq3.3}
	\left(\begin{matrix}
		x^{\Delta}\\
		y^{\Delta}
	\end{matrix}\right)=\left(\begin{matrix}
		0 & 1\\&\\
		-q(t) & \dfrac{\phi^{\Delta}(t)}{\phi(t)}
	\end{matrix}\right)\left(\begin{matrix}
		x\\
		y
	\end{matrix}\right)+\left(\begin{matrix}
		0\\&\\
		h(t)y
	\end{matrix}\right).
\end{equation}
Let $\cos_{\phi}(t,t_0)=\cos_{\phi}(t),\sin_{\phi}(t,t_0)=\sin_{\phi}(t)$, hence it can be verified that \begin{equation}\label{eq3.5}
	X(t)=\left(\begin{matrix}
		\cos_{\phi}(t)&\dfrac{1}{\phi(t_0)}\sin_{\phi}(t)\\&\\
		-\phi(t)\sin_{\phi}(t)&\dfrac{\phi(t)}{\phi(t_0)}\cos_{\phi}(t)
	\end{matrix}\right)
\end{equation}
is the fundamental matrix solution of the system
\begin{equation}\label{eqb}
	\left(\begin{matrix}
		x^{\Delta}\\
		y^{\Delta}
	\end{matrix}\right)=\left(\begin{matrix}
		0 & 1\\&\\
		-q(t) & \dfrac{\phi^{\Delta}(t)}{\phi(t)}
	\end{matrix}\right)\left(\begin{matrix}
		x\\
		y
	\end{matrix}\right).
\end{equation}
\begin{rmk}
   Let $A(t)=\left(\begin{smallmatrix}
		0 & 1\\&\\
		-q(t) & \frac{\phi^{\Delta}(t)}{\phi(t)}
	\end{smallmatrix}\right)$ and we claim that $A(t)\in\mathcal{R}$. On the one hand, $q(t),\phi^{\Delta}(t)$ are rd-continuous and $\phi(t)\neq 0$, so $A(t)\in C_{rd}(\mathbb{T},\mathbb{R}^{2\times 2})$. On the other hand, $$\det(I+\mu(t) A(t))=\dfrac{\phi^{\sigma}(t)}{\phi(t)}+\mu^2(t)q(t)=\frac{\phi^{\sigma}(t)(1+\mu^2(t)\phi^2(t))}{\phi(t)}\neq 0,\quad \text{for all }t\in\mathbb{T},$$ hence $A(t)$ is regressive. Besides we have to consider the rationality of the function $\sin_{\phi}(t)$ and $\cos_{\phi}(t)$. We assert that $\sin_{\phi}(t)$ and $\cos_{\phi}(t)$ are well defined, since $$(1+i\mu(t)\phi(t))(1-i\mu(t)\phi(t))=1+\mu^2(t)\phi^2(t)\neq 0$$ holds for all $t\in\mathbb{T}$. \end{rmk}
The solution of system $(\ref{eq3.3})$ satisfying $\left(\begin{matrix}
		x(t_0)\\
		y(t_0)
	\end{matrix}\right)=\left(\begin{matrix}
		x_0\\
		y_0
	\end{matrix}\right)$ can be represented as
	\begin{equation}\label{eqe2}
		\left(\begin{matrix}
		x(t)\\
		y(t)
	\end{matrix}\right)=X(t)\left(\begin{matrix}
		x_0\\
		y_0
	\end{matrix}\right)+\int_{t_0}^tX(t)X^{-1}(s)(I+\mu(s) A(s))^{-1}\left(\begin{matrix}
		0\\
		h(s)y(s)
	\end{matrix}\right)\Delta s.
	\end{equation}
	Note that $$X^{-1}(s)=\left(\begin{matrix}
		\dfrac{\cos_{\phi}(s)}{e_{\mu\phi^2}(s)}&-\dfrac{\sin_{\phi}(s)}{\phi(s)e_{\mu\phi^2}(s)}\\&\\
		\dfrac{\phi(0)\sin_{\phi}(s)}{e_{\mu\phi^2}(s)}&\dfrac{\phi(0)\cos_{\phi}(s)}{\phi(s)e_{\mu\phi^2}(s)}
	\end{matrix}\right),\quad I+\mu(s) A(s)=\left(\begin{matrix}
		1&\mu(s)\\&\\
		-\mu(s)q(s)&\dfrac{\phi^{\sigma}(s)}{\phi(s)}
	\end{matrix}\right),$$
$$\det(I+\mu(s) A(s))=\dfrac{\phi^{\sigma}(s)}{\phi(s)}+\mu^2(s)q(s)=\frac{\phi^{\sigma}(s)(1+\mu^2(s)\phi^2(s))}{\phi(s)},$$
and $$(I+\mu(s) A(s))^{-1}=\left(\begin{matrix}
		\dfrac{1}{1+\mu^2(s)\phi^2(s)}&\dfrac{-\mu(s)\phi(s)}{\phi^{\sigma}(s)(1+\mu^2(s)\phi^2(s))}\\&\\
		\dfrac{\mu(s)\phi^2(s)}{1+\mu^2(s)\phi^2(s)}&\dfrac{\phi(s)}{\phi^{\sigma}(s)(1+\mu^2(s)\phi^2(s))}
	\end{matrix}\right).$$
	 Substituting them in Eq. $(\ref{eqe2})$, then we have\begin{equation}\label{eq3.6}
	\begin{split}
		\left(\begin{matrix}
		x(t)\\
		y(t)
	\end{matrix}\right)&=\left(\begin{matrix}
		\cos_{\phi}(t)&\dfrac{1}{\phi(t_0)}\sin_{\phi}(t)\\&\\
		-\phi\sin_{\phi}(t)&\dfrac{\phi(t)}{\phi(t_0)}\cos_{\phi}(t)
	\end{matrix}\right)\left(\begin{matrix}
		x_0\\
		y_0
	\end{matrix}\right)\\
	 &+\int_{t_0}^t\left(\begin{matrix}
		h(s)\dfrac{-\mu(s)\phi(s)\cos_{\phi}(t,s)+\sin_{\phi}(t,s)}{\phi^{\sigma}(s)(1+\mu^2(s)\phi^2(s))}y(s)\\&\\
		h(s)\dfrac{\mu(s)\phi(s)\phi(t)\sin_{\phi}(t,s)+\phi(t)\cos_{\phi}(t,s)}{\phi^{\sigma}(s)(1+\mu^2(s)\phi^2(s))}y(s)		
	\end{matrix}\right)\Delta s.
	\end{split}
\end{equation}
Let $\left(\begin{matrix}
	x(t)\\y(t)
\end{matrix}\right),
\left(\begin{matrix}
	\bar{x}(t)\\\bar{y}(t)
\end{matrix}\right)$ denote the solutions of system $(\ref{eq3.3})$(i.e. $(\ref{a2})$) that satisfy the initial condition
$\left(\begin{matrix}
	x(t_0)\\y(t_0)
\end{matrix}\right)=\left(\begin{matrix}
	1\\0
\end{matrix}\right)$,
$\left(\begin{matrix}
	\bar{x}(t_0)\\\bar{y}(t_0)
\end{matrix}\right)=\left(\begin{matrix}
	0\\1
\end{matrix}\right)$, respectively. By Eq. $(\ref{eqdab})$  we get\begin{equation}\label{eq3.7}
	\mathcal{A}=x(t_0+T)+\bar{y}(t_0+T).
\end{equation}

	Now let's use the approximation method to calculate $\mathcal{A}$. We assume that
$$\left(\begin{matrix}
	x_0(t)\\y_0(t)
\end{matrix}\right)=X(t)\left(\begin{matrix}
	1\\0
\end{matrix}\right)=\left(\begin{matrix}
		\cos_{\phi}(t)\\
		-\phi(t)\sin_{\phi}(t)
	\end{matrix}\right).$$
	And if $\left(\begin{matrix}
	x_{n-1}(t)\\y_{n-1}(t)
\end{matrix}\right)$ was given, then we define $\left(\begin{matrix}
		x_n(t)\\
		y_n(t)
	\end{matrix}\right)$ inductively by
\begin{equation}\label{eq3.8}
	\left(\begin{matrix}
		x_n(t)\\
		y_n(t)
	\end{matrix}\right)=X(t)\left(\begin{matrix}
		1\\
		0
	\end{matrix}\right)
	 +\int_{t_0}^t\left(\begin{matrix}
		h(s)\dfrac{-\mu(s)\phi(s)\cos_{\phi}(t,s)+\sin_{\phi}(t,s)}{\phi^{\sigma}(s)(1+\mu^2(s)\phi^2(s))}y_{n-1}(s)\\&\\
		h(s)\dfrac{\mu(s)\phi(s)\phi(t)\sin_{\phi}(t,s)+\phi(t)\cos_{\phi}(t,s)}{\phi^{\sigma}(s)(1+\mu^2(s)\phi^2(s))}y_{n-1}(s)		
	\end{matrix}\right)\Delta s.
\end{equation}
Similarly, we  assume that
$$\left(\begin{matrix}
	\bar{x}_0(t)\\\bar{y}_0(t)
\end{matrix}\right)=X(t)\left(\begin{matrix}
	0\\1
\end{matrix}\right)=\left(\begin{matrix}
		\dfrac{1}{\phi(t_0)}\sin_{\phi}(t)\\&\\
		\dfrac{\phi(t)}{\phi(t_0)}\cos_{\phi}(t)
	\end{matrix}\right).$$
	And if $\left(\begin{matrix}
	\bar{x}_{n-1}(t)\\\bar{y}_{n-1}(t)
\end{matrix}\right)$ was given, then we define $\left(\begin{matrix}
		\bar{x}_n(t)\\
		\bar{y}_n(t)
	\end{matrix}\right)$ inductively by
\begin{equation}\label{eq3.9}
	\left(\begin{matrix}
		\bar{x}_n(t)\\
		\bar{y}_n(t)
	\end{matrix}\right)=X(t)\left(\begin{matrix}
		0\\
		1
	\end{matrix}\right)
	 +\int_{t_0}^t\left(\begin{matrix}
		h(s)\dfrac{-\mu(s)\phi(s)\cos_{\phi}(t,s)+\sin_{\phi}(t,s)}{\phi^{\sigma}(s)(1+\mu^2(s)\phi^2(s))}\bar{y}_{n-1}(s)\\&\\
		h(s)\dfrac{\mu(s)\phi(s)\phi(t)\sin_{\phi}(t,s)+\phi(t)\cos_{\phi}(t,s)}{\phi^{\sigma}(s)(1+\mu^2(s)\phi^2(s))}\bar{y}_{n-1}(s)		
	\end{matrix}\right)\Delta s.
\end{equation}
It is easy to see that\begin{equation}\label{eq3.10}
	\left\{\begin{split}
		x_1(t)=&\cos_{\phi}(t)-\displaystyle{\int_{t_0}^th(s)\phi(s)\sin_{\phi}(s)\dfrac{-\mu(s)\phi(s)\cos_{\phi}(t,s)+\sin_{\phi}(t,s)}{\phi^{\sigma}(s)(1+\mu^2(s)\phi^2(s))}\Delta s,}\\
		y_1(t)=&-\phi(t)\sin_{\phi}(t)-\phi(t)\displaystyle{\int_{t_0}^th(s)\phi(s)\sin_{\phi}(s)\dfrac{\mu(s)\phi(s)\sin_{\phi}(t,s)+\cos_{\phi}(t,s)}{\phi^{\sigma}(s)(1+\mu^2(s)\phi^2(s))}\Delta s},\\
		\bar{y}_1(t)=&\dfrac{\phi(t)}{\phi(t_0)}\cos_{\phi}(t)+\dfrac{\phi(t)}{\phi(t_0)}\displaystyle{\int_{t_0}^th(s)\phi(s)cos_{\phi}(s)	\dfrac{\mu(s)\phi(s)\sin_{\phi}(t,s)+\cos_{\phi}(t,s)}{\phi^{\sigma}(s)(1+\mu^2(s)\phi^2(s))}\Delta s}.	
\end{split}\right.
\end{equation}
\begin{rmk}
	Note that $\bar{x}_1(t)$ doesn't work for recursion, so we don't have to figure it out. For the same reason, $\bar{x}_n(t)$ also needn't to be calculated.
\end{rmk}
Let\begin{equation}
	\begin{split}
		P(t,s)=&\dfrac{-\mu(s)\phi(s)\cos_{\phi}(t,s)+\sin_{\phi}(t,s)}{\phi^{\sigma}(s)(1+\mu^2(s)\phi^2(s))},\\
Q(t,s)=&\dfrac{\mu(s)\phi(s)\phi(t)\sin_{\phi}(t,s)+\phi(t)\cos_{\phi}(t,s)}{\phi^{\sigma}(s)(1+\mu^2(s)\phi^2(s))}.
	\end{split}
\end{equation}It can be seen that\begin{equation*}
	\begin{split}
		\sin_{\phi}(\sigma(s),t)=&\frac{e_{i\phi}(\sigma(s),t)-e_{-i\phi}(\sigma(s),t)}{2i}\\=&\frac{(1+i\mu(s)\phi(s))e_{i\phi}(s,t)-(1-i\mu(s)\phi(s))e_{-i\phi}(s,t)}{2i}\\=&\sin_{\phi}(s,t)+\mu(s)\phi(s)\cos_{\phi}(s,t),
	\end{split}
\end{equation*}and $$\sin_{\phi}(t,s)=-e_{\mu\phi^2}(t,s)\sin_{\phi}(s,t).$$Similarly, we have $$\cos_{\phi}(\sigma(s),t)=\cos_{\phi}(s,t)-\mu(s)\phi(s)\sin_{\phi}(s,t),$$ and $$\cos_{\phi}(t,s)=e_{\mu\phi^2}(t,s)\cos_{\phi}(s,t).$$ Then the function $P,Q$ can be simplified as \begin{equation}\label{pq}
	P(t,s)=\frac{1}{\phi^{\sigma}(s)}\sin_{\phi}(t,\sigma(s)),\qquad Q(t,s)=\frac{\phi(t)}{\phi^{\sigma}(s)}\cos_{\phi}(t,\sigma(s)).
\end{equation}$$$$
Using Eq. $(\ref{eq3.8})$, $(\ref{eq3.9})$, $(\ref{eq3.10})$ we obtain
\begin{equation}\label{eq3.11}
	\left\{\begin{split}
		x_2(t)=&\cos_{\phi}(t)-\displaystyle{\int_{t_0}^th(s)\sin_{\phi}(s)P(t,s)\phi(s)\Delta s}
	\\&-\displaystyle{\int_{t_0}^t\int_{t_0}^{t_1}h(t_1)h(t_2)\sin_{\phi}(t_2)P(t,t_1)Q(t_1,t_2)\phi(t_2)\Delta t_2\Delta t_1},\\
	y_2(t)=&-\displaystyle{\phi(t)\sin_{\phi}(t)-\int_{t_0}^th(s)\sin_{\phi}(s)Q(t,s)\phi(s)\Delta s}\\&-\displaystyle{\int_{t_0}^t\int_{t_0}^{t_1}h(t_1)h(t_2)\sin_{\phi}(t_2)Q(t,t_1)Q(t_1,t_2)\phi(t_2)\Delta t_2\Delta t_1,}\\
	\bar{y}_2(t)=&\displaystyle{\dfrac{\phi(t)}{\phi(t_0)}\cos_{\phi}(t)+\dfrac{1}{\phi(t_0)}\int_{t_0}^th(s)\cos_{\phi}(s)Q(t,s)\phi(s)\Delta s}\\&
	+\displaystyle{\dfrac{1}{\phi(t_0)}\int_{t_0}^t\int_{t_0}^{t_1}h(t_1)h(t_2)\cos_{\phi}(t_2)Q(t,t_1)Q(t_1,t_2)\phi(t_2)\Delta t_2\Delta t_1.}
	\end{split}
\right.
\end{equation}
Let\begin{equation}\label{eq3.12}
\left\{\begin{split}
	u_k(t)=&\displaystyle{-\int_{t_0}^t\int_{t_0}^{t_1}\cdots\int_{t_0}^{t_{k-1}}\phi(t_k)\sin_{\phi}(t_k)Q(t_{k-1},t_k)}\\&\displaystyle{\cdots Q(t_1,t_2)P(t,t_1)\prod\limits_{i=1}^kh(t_i)\Delta t_k\cdots\Delta t_1,}\\
	v_k(t)=&\displaystyle{-\int_{t_0}^t\int_{t_0}^{t_1}\cdots\int_{t_0}^{t_{k-1}}\phi(t_k)\sin_{\phi}(t_k)Q(t_{k-1},t_k)}\\&\displaystyle{\cdots Q(t_1,t_2)Q(t,t_1)\prod\limits_{i=1}^kh(t_i)\Delta t_k\cdots\Delta t_1,}\\
	\bar{v}_k(t)=&\displaystyle{\dfrac{1}{\phi(t_0)}\int_{t_0}^t\int_{t_0}^{t_1}\cdots\int_{t_0}^{t_{k-1}}\phi(t_k)\cos_{\phi}(t_k)Q(t_{k-1},t_k)}\\&\displaystyle{\cdots Q(t_1,t_2)Q(t,t_1)\prod\limits_{i=1}^kh(t_i)\Delta t_k\cdots\Delta t_1,}\\&(t_0\leq t_k\leq t_{k-1}\leq\cdots\leq t_1\leq t,~k=1,2,\cdots).
	\end{split}\right.
\end{equation}
For Eq. $(\ref{eq3.10})$, we have\begin{equation}
	\left\{\begin{split}
		x_1(t)=&\cos_{\phi}(t)+u_1(t),\\
		y_1(t)=&-\phi(t)\sin_{\phi}(t)+v_1(t),\\
		\bar{y}_1(t)=&\dfrac{\phi(t)}{\phi(t_0)}\cos_{\phi}(t)+\bar{v}_1(t).
	\end{split}\right.
\end{equation}
For Eq. $(\ref{eq3.11})$, we have \begin{equation}
	\left\{\begin{split}
		x_2(t)=&\cos_{\phi}(t)+u_1(t)+u_2(t),\\
		y_2(t)=&-\phi(t)\sin_{\phi}(t)+v_1(t)+v_2(t),\\
		\bar{y}_2(t)=&\dfrac{\phi(t)}{\phi(t_0)}\cos_{\phi}(t)+\bar{v}_1(t)+\bar{v}_2(t).
	\end{split}\right.
\end{equation}
Now we take an inductive assumption that\begin{equation}\label{eq3.13}
	\left\{\begin{split}
		x_k(t)=&\cos_{\phi}(t)+u_1(t)+\cdots+u_k(t),\\
		y_k(t)=&-\phi(t)\sin_{\phi}(t)+v_1(t)+\cdots+v_k(t),\\
		\bar{y}_k(t)=&\dfrac{\phi(t)}{\phi(t_0)}\cos_{\phi}(t)+\bar{v}_1(t)\cdots+\bar{v}_k(t).
	\end{split}\right.
\end{equation}
According to Eq. $(\ref{eq3.8})$ and $(\ref{eq3.9})$,
\begin{equation}\label{eq3.14}
	\left\{\begin{split}
		x_{k+1}(t)=&\displaystyle{\cos_{\phi}(t)+\int_{t_0}^th(s)P(t,s)y_k(s)\Delta s,}\\
		y_{k+1}(t)=&\displaystyle{-\phi(t)\sin_{\phi}(t)+\int_{t_0}^th(s)Q(t,s)y_k(s)\Delta s,}\\
		\bar{y}_{k+1}(t)=&\displaystyle{\dfrac{\phi(t)}{\phi(t_0)}\cos_{\phi}(t)+\int_{t_0}^th(s)Q(t,s)\bar{y}_k(s)\Delta s.}
	\end{split}\right.
\end{equation}
Substituting Eq. $(\ref{eq3.13})$ into Eq. $(\ref{eq3.14})$, we get
\begin{equation}\label{eq3.15}
	\left\{\begin{split}
		x_{k+1}(t)=&\cos_{\phi}(t)+u_1(t)+\cdots+u_{k+1}(t),\\
		y_{k+1}(t)=&-\phi(t)\sin_{\phi}(t)+v_1(t)+\cdots+v_{k+1}(t),\\
		\bar{y}_{k+1}(t)=&\dfrac{\phi(t)}{\phi(t_0)}\cos_{\phi}(t)+\bar{v}_1(t)+\cdots+\bar{v}_{k+1}(t).
	\end{split}\right.
\end{equation}
This implies that Eq.$(\ref{eq3.13})$ holds for all $k\in\mathbb{N}$.

Let $[t_0,t_0+T]_{\mathbb{T}}:=[t_0,t_0+T]\cap\mathbb{T}$. For the bounded closed interval $[t_0,t_0+T]_{\mathbb{T}}$, consider the series\begin{equation}\label{jishu}
	y_0(t)+\sum\limits_{k=1}^{\infty}[y_k(t)-y_{k-1}(t)],\quad t\in[t_0,t_0+T]_{\mathbb{T}},
\end{equation}and the partial sum  $$y_0(t)+\sum\limits_{k=1}^{n}[y_k(t)-y_{k-1}(t)]=y_n(t).$$ So if we want to prove the sequence $\{y_n(t)\}$ is uniformly convergent on $[t_0,t_0+T]_{\mathbb{T}}$, just show that series $(\ref{jishu})$ converges uniformly on $[t_0,t_0+T]_{\mathbb{T}}$. Note that $\sin_{\phi}(t,s),\cos_{\phi}(t,s),\phi(t), \mu(t),h(t)$ are rd-continuous. By lemma $\ref{bound}$, we have the functions $$|\phi(t)|,|\sin_{\phi}(t)|,|\cos_{\phi}(t)|,|h(t)|$$ are all bounded on compact set $[t_0,t_0+T]_{\mathbb{T}}$. By Eq. $(\ref{pq})$ , since $\phi(t)\neq 0$, it can be seen that $|P(t,s)|,|Q(t,s)|$ are all bounded on $[t_0,t_0+T]_{\mathbb{T}}\times [t_0,t_0+T]_{\mathbb{T}}$. Let $M$ denote their common upper bound, so we have\begin{equation}\label{eq624}
	\begin{split}
		&|y_k(t)-y_{k-1}(t)|=|v_k(t)|\\
		=&\left|\int_{t_0}^t\int_{t_0}^{t_1}\cdots\int_{t_0}^{t_{k-1}}\phi(t_k)\sin_{\phi}(t_k)Q(t_{k-1},t_k)\cdots Q(t_1,t_2)Q(t,t_1)\prod\limits_{i=1}^kh(t_i)\Delta t_k\cdots\Delta t_1\right|\\
		\leq & \int_{t_0}^t\int_{t_0}^{t_1}\cdots\int_{t_0}^{t_{k-1}}\left|\phi(t_k)\sin_{\phi}(t_k)Q(t_{k-1},t_k)\cdots Q(t_1,t_2)Q(t,t_1)\prod\limits_{i=1}^kh(t_i)\right|\Delta t_k\cdots\Delta t_1\\
		\leq & \int_{t_0}^t\int_{t_0}^{t_1}\cdots\int_{t_0}^{t_{k-1}}M^{2k+2}\Delta t_k\cdots\Delta t_1
		\leq \frac{M^{2k+2}(t-t_0)^k}{k!}\leq \frac{M^{2k+2}T^k}{k!},\quad t_0\leq t\leq t_0+T.
	\end{split}
\end{equation}
The third inequality in $(\ref{eq624})$ is derived from Corollary $(\ref{cor624})$.
According to Weierstrass Discriminance, series $(\ref{jishu})$ is uniformly convergent on $[t_0,t_0+T]_{\mathbb{T}}$, thus the sequence $\{y_k(t)\}$ is uniformly convergent on $[t_0,t_0+T]_{\mathbb{T}}$.
Now assume $$\lim\limits_{k\rightarrow\infty}y_k(t)=y^{*}(t).$$ By lemma $\ref{yizhi}$ we get $y^{*}(t)$ is rd-continuous on $[t_0,t_0+T]_{\mathbb{T}}$. Hence\begin{equation}
	\begin{split}
		\lim\limits_{k\rightarrow\infty }y_k(t)&=-\phi(t)\sin_{\phi}(t)+\lim\limits_{k\rightarrow\infty }\int_{t_0}^th(s)Q(t,s)y_{k-1}(s)\Delta s\\&=-\phi(t)\sin_{\phi}(t)+\int_{t_0}^t\lim\limits_{k\rightarrow\infty }h(s)Q(t,s)y_{k-1}(s)\Delta s,
	\end{split}
\end{equation}i.e.,$$y^{*}(t)=-\phi(t)\sin_{\phi}(t)+\int_{t_0}^th(s)Q(t,s)y^{*}(s)\Delta s.
$$
In the same way, the sequence $\{x_k(t)\}$ uniformly converges to $x^*(t)$ which satisfies $$x^*(t)=\cos_{\phi}(t)+\int_{t_0}^th(s)P(t,s)y^*(s)\Delta s.$$ That is to say $\left(\begin{matrix}
	x^*(t)\\y^*(t)
\end{matrix}\right)$ is the solution of system $(\ref{eq3.3})$ with the initial condition $$\left(\begin{matrix}
	x^*(t_0)\\y^*(t_0)
\end{matrix}\right)=\left(\begin{matrix}
	1\\0
\end{matrix}\right).$$ For the theorem of existence and uniqueness of solution, $x^*(t)=x(t),~y^{*}(t)=y(t)$. Let's do the same things for $\bar{y}_n(t)$. Finally we have $\left(\begin{matrix}
	x_n(t)\\y_n(t)
\end{matrix}\right)$ uniformly converges to $\left(\begin{matrix}
	x(t)\\y(t)
\end{matrix}\right)$ and $\bar{y}_n(t)$ uniformly converges to $\bar{y}(t)$.
Let$$\left\{\begin{array}{ccl}
	\mathcal{A}_0&=&x_0(t_0+T)+\bar{y}_0(t_0+T),\\
	\mathcal{A}_1&=&u_1(t_0+T)+\bar{v}_1(t_0+T),\\
	\cdots &&\\
	\mathcal{A}_n&=&u_n(t_0+T)+\bar{v}_n(t_0+T).
\end{array}\right.$$
By $\mathcal{A}=x(t_0+T)+\bar{y}(t_0+T)$ and Eq. $(\ref{eq3.13})$, we get\begin{equation}\label{eq3.16}
	\mathcal{A}=\sum\limits_{n=0}^{\infty}\mathcal{A}_n.
\end{equation}Now we evaluate $\mathcal{A}_n(n=0,1,2,3,\cdots)$:\begin{equation}\label{eq3.19}
	\begin{split}
	\mathcal{A}_0=&\displaystyle{\left(1+\dfrac{\phi(t_0+T)}{\phi(t_0)}\right)\cos_{\phi}(t_0+T)}\\
	    \mathcal{A}_1 =&\displaystyle{\int_{t_0}^{t_0+T}\left(\dfrac{1}{\phi(t_0)}\cos_{\phi}(t_1)Q(t_0+T,t_1)-\sin_{\phi}(t_1)P(t_0+T,t_1)\right)\phi(t_1)h(t_1)\Delta t_1}\\
	    		\mathcal{A}_n =&-\displaystyle{\int_{t_0}^{t_0+T}\int_{t_0}^{t_1}\cdots\int_{t_0}^{t_{n-1}}\phi(t_n)\sin_{\phi}(t_n)Q(t_{n-1},t_n)}\\&\displaystyle{\cdots Q(t_1,t_2)P(t_0+T,t_1)\prod\limits_{i=1}^nh(t_i)\Delta t_n\cdots\Delta t_1}\\&+\displaystyle{\dfrac{1}{\phi(t_0)}\int_{t_0}^{t_0+T}\int_{t_0}^{t_1}\cdots\int_{t_0}^{t_{n-1}}\phi(t_n)\cos_{\phi}(t_n)Q(t_{n-1},t_n)}\\&\displaystyle{\cdots Q(t_1,t_2)Q(t_0+T,t_1)\prod\limits_{i=1}^nh(t_i)\Delta t_n\cdots\Delta t_1}\\=&
		\displaystyle{\int_{t_0}^{t_0+T}\int_{t_0}^{t_1}\cdots\int_{t_0}^{t_{n-1}}\left(\dfrac{1}{\phi(t_0)}\cos_{\phi}(t_n)Q(t_0+T,t_1)-\sin_{\phi}(t_n)P(t_0+T,t_1)\right)\cdot}\\&\displaystyle{\phi(t_n)	Q(t_{n-1},t_n)\cdots Q(t_1,t_2)\prod\limits_{i=1}^nh(t_i)\Delta t_n\cdots\Delta t_1},\quad n\geq2.		\end{split}
\end{equation}
Thus we have\begin{equation}
\begin{split}
	\mathcal{A} =&\displaystyle{\left(1+\dfrac{\phi(t_0+T)}{\phi(t_0)}\right)\cos_{\phi}(t_0+T)}\\&+\displaystyle{\int_{t_0}^{t_0+T}\left(\dfrac{1}{\phi(t_0)}\cos_{\phi}(t_1)Q(t_0+T,t_1)-\sin_{\phi}(t_1)P(t_0+T,t_1)\right)\phi(t_1)h(t_1)\Delta t_1}\\&+\displaystyle{\sum\limits_{n=2}^{\infty}\int_{t_0}^{t_0+T}\int_{t_0}^{t_1}\cdots\int_{t_0}^{t_{n-1}}\left(\dfrac{1}{\phi(t_0)}\cos_{\phi}(t_n)Q(t_0+T,t_1)-\sin_{\phi}(t_n)P(t_0+T,t_1)\right)\cdot}\\&\displaystyle{\phi(t_n)	Q(t_{n-1},t_n)\cdots Q(t_1,t_2)\prod\limits_{i=1}^nh(t_i)\Delta t_n\cdots\Delta t_1}.
\end{split}
	\end{equation}
The formula above can be used for approximations and error estimates. Let$$h(t,s)=\left(\dfrac{1}{\phi(t_0)}\cos_{\phi}(t)Q(t_0+T,s)-\sin_{\phi}(t)P(t_0+T,s)\right)\cdot\phi(t).$$
Then we have $$|\mathcal{A}_n|\leq \displaystyle{\int_{t_0}^{t_0+T}\int_{t_0}^{t_1}\cdots\int_{t_0}^{t_{n-1}}K_1K_2^{n-1}K_3^n\Delta t_n\cdots\Delta t_1}\leq \frac{K_1K_2^{n-1}K_3^nT^n}{n!},$$ where $K_1,K_2,K_3$ are upper bounds of $|h(t,s)|$, $|Q(t,s)|$ and $|h(t)|$ respectively. Let\begin{equation}\label{eqer1}
\mathcal{A}(n)=\mathcal{A}_0+\mathcal{A}_1+\cdots+\mathcal{A}_n,
	\end{equation}
and we have the following error estimate\begin{equation}\label{eqer2}
	|\mathcal{A}-\mathcal{A}(n)|\leq\sum\limits_{k=n+1}^{\infty}\frac{K_1}{K_2}\frac{(K_2K_3T)^k}{k!}=\frac{K_1}{K_2}\left(e^{K_2K_3T}-\sum\limits_{k=0}^{n}\frac{(K_2K_3T)^k}{k!}\right).
\end{equation}
\begin{thm}
	\label{thm12.112}The expression of $\mathcal{A}$ mentioned in Theorem \ref{thm12.111} is \begin{equation}\label{eq3.21}
\begin{split}
	\mathcal{A} =&\displaystyle{\left(1+\dfrac{\phi(t_0+T)}{\phi(t_0)}\right)\cos_{\phi}(t_0+T)}\\&+\displaystyle{\int_{t_0}^{t_0+T}\left(\dfrac{1}{\phi(t_0)}\cos_{\phi}(t_1)Q(t_0+T,t_1)-\sin_{\phi}(t_1)P(t_0+T,t_1)\right)\phi(t_1)h(t_1)\Delta t_1}\\&+\displaystyle{\sum\limits_{n=2}^{\infty}\int_{t_0}^{t_0+T}\int_{t_0}^{t_1}\cdots\int_{t_0}^{t_{n-1}}\left(\dfrac{1}{\phi(t_0)}\cos_{\phi}(t_n)Q(t_0+T,t_1)-\sin_{\phi}(t_n)P(t_0+T,t_1)\right)\cdot}\\&\displaystyle{\phi(t_n)	Q(t_{n-1},t_n)\cdots Q(t_1,t_2)\prod\limits_{i=1}^nh(t_i)\Delta t_n\cdots\Delta t_1},
\end{split}
	\end{equation}and the expression of $\mathcal{B}$ is$$\mathcal{B}=e_{-p+\mu q}(t_0+T,t_0).$$\end{thm}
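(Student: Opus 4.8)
The plan is to establish the two formulas by independent routes: the expression for $\mathcal{A}$ through a successive-approximation (Picard) scheme applied to the perturbed system, and the expression for $\mathcal{B}$ through Liouville's formula on time scales.

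For $\mathcal{A}$, the starting point is that $\mathcal{A}=x(t_0+T)+\bar{y}(t_0+T)$ is the trace of the monodromy matrix $Y(t_0+T)$, whose two columns are the solutions of the first-order system $(\ref{eq3.3})$ with initial data $(1,0)^{\top}$ and $(0,1)^{\top}$. The idea is to treat $(\ref{eq3.3})$ as the explicitly solvable homogeneous system $(\ref{eqb})$ driven by the forcing term $h(t)y$. First I would record the variation-of-constants representation $(\ref{eqe2})$--$(\ref{eq3.6})$ built from the fundamental matrix $X(t)$ in $(\ref{eq3.5})$; its regressivity and invertibility are precisely what the preceding remark secures by verifying $A(t)\in\mathcal{R}$ and $1+\mu^2(t)\phi^2(t)\neq0$. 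Next I would introduce the iterates $\{x_n\},\{y_n\},\{\bar{y}_n\}$ defined by $(\ref{eq3.8})$--$(\ref{eq3.9})$ and prove, by induction on $k$, the closed forms $(\ref{eq3.13})$ in terms of the iterated integrals $u_k,v_k,\bar{v}_k$ of $(\ref{eq3.12})$; the inductive step is just the substitution of $(\ref{eq3.13})$ into the recursion $(\ref{eq3.14})$, using the kernel simplifications $(\ref{pq})$ for $P$ and $Q$.

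The analytic heart of the argument is the uniform convergence of these iterates on the compact set $[t_0,t_0+T]_{\mathbb{T}}$. I would view $\{y_n\}$ as the sequence of partial sums of the telescoping series $(\ref{jishu})$ and estimate each term $|v_k(t)|$ as in $(\ref{eq624})$. Here Lemma $\ref{bound}$ furnishes a common bound $M$ for $|\phi|,|\sin_{\phi}|,|\cos_{\phi}|,|h|,|P|,|Q|$ on the compact interval, while Corollary $\ref{cor624}$ collapses the $k$-fold iterated $\Delta$-integral of the constant $M^{2k+2}$ over the ordered simplex into the factorial bound $M^{2k+2}T^{k}/k!$. The Weierstrass $M$-test then yields uniform convergence, Lemma $\ref{yizhi}$ lets me carry the limit through the $\Delta$-integral, and the limit therefore satisfies the same integral equation as $(x,y)$; uniqueness of solutions identifies them, and the same reasoning gives $\bar{y}_n\to\bar{y}$. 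Summing the term-by-term contributions $\mathcal{A}_n=u_n(t_0+T)+\bar{v}_n(t_0+T)$ produces $\mathcal{A}=\sum_{n=0}^{\infty}\mathcal{A}_n$, and merging the $P$- and $Q$-integrals inside each $\mathcal{A}_n$ reproduces $(\ref{eq3.21})$.

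For $\mathcal{B}$ the argument is brief. Since $\mathcal{B}=\det Y(t_0+T)$ is the product of the two multipliers, Liouville's formula on time scales applied to $Y^{\Delta}=S(t)Y$ gives $\det Y(t)=e_{\mathrm{tr}\,S+\mu\det S}(t,t_0)\det Y(t_0)$; with $\mathrm{tr}\,S(t)=-p(t)$, $\det S(t)=q(t)$ and $\det Y(t_0)=1$ this is exactly $\mathcal{B}=e_{-p+\mu q}(t_0+T,t_0)$. The main obstacle is the convergence step: one must confirm that the factorial decay in $(\ref{eq624})$ genuinely dominates uniformly, i.e. that $(\ref{pq})$ together with $\phi(t)\neq0$ produces a finite common bound $M$ and that Corollary $\ref{cor624}$ is applicable on the simplex $t_0\leq t_k\leq\cdots\leq t_1\leq t$. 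Once uniform convergence and the interchange of limit and $\Delta$-integral are in place, the rest is bookkeeping of the nested integrals.
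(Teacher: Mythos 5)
Your proposal matches the paper's own derivation essentially step for step: the same variation-of-constants setup around the fundamental matrix $X(t)$ of $(\ref{eqb})$, the same Picard iterates with the closed forms $(\ref{eq3.13})$, the same telescoping-series/Weierstrass argument using Lemma \ref{bound}, Corollary \ref{cor624} and Lemma \ref{yizhi} to justify uniform convergence and the interchange of limit and $\Delta$-integral, and the same Liouville-formula computation (with $\mathrm{tr}\,S=-p$, $\det S=q$) for $\mathcal{B}$. The argument is correct and there is nothing to add.
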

\begin{thm}\label{thm12.113}
	Let $\mathbb{T}$ be an arbitrary discrete time scale and there are $k$ points in $[t_0,t_0+T)_{\mathbb{T}}$, then equation $(\ref{eq3.21})$ can be simplified as\begin{equation}
\begin{split}
\mathcal{A} =&\mathcal{A}(k)=\displaystyle{\left(1+\dfrac{\phi(t_0+T)}{\phi(t_0)}\right)\cos_{\phi}(t_0+T)}\\&+\displaystyle{\int_{t_0}^{t_0+T}\left(\dfrac{1}{\phi(t_0)}\cos_{\phi}(t_1)Q(t_0+T,t_1)-\sin_{\phi}(t_1)P(t_0+T,t_1)\right)\phi(t_1)h(t_1)\Delta t_1}\\&+\displaystyle{\sum\limits_{n=2}^k\int_{t_0}^{t_0+T}\int_{t_0}^{t_1}\cdots\int_{t_0}^{t_{n-1}}\left(\dfrac{1}{\phi(t_0)}\cos_{\phi}(t_n)Q(t_0+T,t_1)-\sin_{\phi}(t_n)P(t_0+T,t_1)\right)\cdot}\\&\displaystyle{\phi(t_n)	Q(t_{n-1},t_n)\cdots Q(t_1,t_2)\prod\limits_{i=1}^nh(t_i)\Delta t_n\cdots\Delta t_1},
\end{split}
	\end{equation}where $\sum\limits_{n=2}^1(\cdot):=0.$
\end{thm}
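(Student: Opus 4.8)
The plan is to show that every summand $\mathcal{A}_n$ with $n>k$ in the series $(\ref{eq3.16})$ vanishes, so that the infinite series in the expression $(\ref{eq3.21})$ of Theorem \ref{thm12.112} collapses to the finite partial sum $\mathcal{A}(k)$ defined in $(\ref{eqer1})$.

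First I would recall that on a discrete time scale the $\Delta$-integral reduces to a finite sum: for $a,b\in\mathbb{T}$ with $a\le b$ one has $\int_a^b f(s)\,\Delta s=\sum_{s\in[a,b)_{\mathbb{T}}}\mu(s)f(s)$. Applying this to the innermost integral $\int_{t_0}^{t_{i-1}}(\cdots)\,\Delta t_i$ in the definition of $\mathcal{A}_n$ shows that the variable $t_i$ ranges only over the points of $[t_0,t_{i-1})_{\mathbb{T}}$; in particular the upper endpoint is \emph{excluded}, which forces the \emph{strict} inequality $t_i<t_{i-1}$. This strictness is the crucial feature that separates the discrete case from the continuous one, where the analogous diagonal would merely be a measure-zero set.

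Next, reading the $n$-fold iterated integral in $(\ref{eq3.19})$ from the outside inward, the domain of summation for $\mathcal{A}_n$ (with $n\ge2$) becomes the set of tuples $(t_1,\dots,t_n)\in\mathbb{T}^n$ satisfying $t_0\le t_n<t_{n-1}<\cdots<t_1<t_0+T$, that is, the strictly decreasing $n$-chains drawn from $[t_0,t_0+T)_{\mathbb{T}}$. By hypothesis this interval contains exactly $k$ points, so a strictly decreasing chain of length $n$ exists if and only if $n\le k$ (there are precisely $\binom{k}{n}$ of them). Consequently, for every $n>k$ the index set is empty and $\mathcal{A}_n=0$, irrespective of the bounded integrand; note that $\mathcal{A}_k$ itself need not vanish, since there is exactly one decreasing chain of full length $k$.

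Finally, combining this with the representation $\mathcal{A}=\sum_{n=0}^{\infty}\mathcal{A}_n$ from $(\ref{eq3.16})$ yields $\mathcal{A}=\sum_{n=0}^{k}\mathcal{A}_n=\mathcal{A}(k)$, which is exactly the claimed finite expression; the convention $\sum_{n=2}^{1}(\cdot):=0$ merely records the boundary case $k=1$, in which only the $\mathcal{A}_0$ and $\mathcal{A}_1$ terms survive. I expect the only real obstacle to be the bookkeeping in the third paragraph: one must verify carefully that \emph{every} intermediate $\Delta$-integration genuinely contributes a strict inequality, so that the relevant bound is the number of points $k$ and not one larger, and that unwinding the nested half-open ranges loses no admissible tuple. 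Since on a continuous time scale the corresponding simplex has positive measure and the series does not terminate, the discreteness of $\mathbb{T}$ is used in an essential way.
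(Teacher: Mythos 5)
Your proposal is correct and follows essentially the same route as the paper: both arguments observe that the iterated $\Delta$-integral on a discrete time scale forces a strictly decreasing chain of integration variables inside $[t_0,t_0+T)_{\mathbb{T}}$, which contains only $k$ points, so $\mathcal{A}_n=0$ for $n>k$ and the series $(\ref{eq3.16})$ truncates at $\mathcal{A}(k)$. The only cosmetic difference is that the paper phrases the vanishing as ``some $t_i$ must equal $t_0$, so the innermost integral is over a degenerate range,'' whereas you phrase it as the emptiness of the index set of admissible chains; these are the same observation.
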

\begin{proof}
	Now we show that $\mathcal{A}_n=0$ if $n\geq k+1$, where $\mathcal{A}_n$ is defined in equation $(\ref{eq3.19})$. Let's abbreviate $\mathcal{A}_n$ as $\displaystyle{\int_{t_0}^{t_0+T}\int_{t_0}^{t_1}\cdots\int_{t_0}^{t_{n-1}}R(\cdot)\Delta t_n\cdots\Delta t_1},$ where $t_0\leq t_{n-1}<t_{n-2}<\cdots<t_1<t_0+T$. Note that the number of the points in $ [t_0,t_0+T)_{\mathbb{T}}$ is $k$, which is less than $n$. Hence there must exists an element of the set $\{t_i|i=1,2,\ldots n-1\}$  equal to $t_0$, which implies that $\mathcal{A}_n=0$. The proof is completed.
\end{proof}
\begin{thm}\label{thm12.114}
	Consider the Hill's equation (\cite{Tong, hilles} )\begin{equation}
		\label{hills} x^{\Delta\Delta}(t)+q(t)x(t)=0,
	\end{equation}where $q(t)$ and $\mathbb{T}$ are both $T$-periodic, then the expression of $\mathcal{A}$ of $(\ref{hills})$ can be simplified as\begin{equation}\label{eq11.3}
\begin{split}
	\mathcal{A} =&\displaystyle{\left(1+\dfrac{\phi(t_0+T)}{\phi(t_0)}\right)\cos_{\phi}(t_0+T)}\\&+\displaystyle{\int_{t_0}^{t_0+T}\left(\dfrac{1}{\phi(t_0)}\cos_{\phi}(t_1)Q(t_0+T,t_1)-\sin_{\phi}(t_1)P(t_0+T,t_1)\right)\phi(t_1)h(t_1)\Delta t_1}\\&+\displaystyle{\sum\limits_{n=2}^{\infty}\int_{t_0}^{t_0+T}\int_{t_0}^{t_1}\cdots\int_{t_0}^{t_{n-1}}(-1)^n\left(\dfrac{1}{\phi(t_0)}\cos_{\phi}(t_n)Q(t_0+T,t_1)-\sin_{\phi}(t_n)P(t_0+T,t_1)\right)}\\&\displaystyle{\cdot\phi^{\Delta}(t_1)	\prod\limits_{i=2}^{n}\frac{\sin_{\phi}(t_{i-1},\sigma(t_{i}))\phi^{\Delta}(t_i)}{\phi^{\sigma}(t_i)}\Delta t_n\cdots\Delta t_1}.
\end{split}
	\end{equation}
\end{thm}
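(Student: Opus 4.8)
The plan is to recognize Hill's equation $(\ref{hills})$ as the special case $p\equiv 0$ of the general equation $(\ref{a1})$, so that Theorem~\ref{thm12.112} already supplies an expression for $\mathcal{A}$; the entire task is to simplify $(\ref{eq3.21})$ under $p\equiv 0$. The only place $p$ enters the data is through $h$: by $(\ref{eq3.2})$, setting $p\equiv 0$ gives $h(t)=-\phi^{\Delta}(t)/\phi(t)$. The terms $\mathcal{A}_0$ and $\mathcal{A}_1$ of $(\ref{eq3.21})$ are already in closed form and their written form is unchanged (they still display $h(t_1)$), so all the work reduces to simplifying the $n\ge 2$ summand.

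For fixed $n\ge 2$ I would isolate the amplitude factor $\phi(t_n)\,Q(t_{n-1},t_n)\cdots Q(t_1,t_2)\prod_{i=1}^{n}h(t_i)$ in the iterated integral and substitute both $h(t_i)=-\phi^{\Delta}(t_i)/\phi(t_i)$ and the closed form $Q(t,s)=\frac{\phi(t)}{\phi^{\sigma}(s)}\cos_{\phi}(t,\sigma(s))$ from $(\ref{pq})$. The $n$ copies of $h$ pull out the global sign $(-1)^n$ and the factors $\phi^{\Delta}(t_i)/\phi(t_i)$, while each $Q(t_j,t_{j+1})$ contributes $\phi(t_j)$ to the numerator and $\phi^{\sigma}(t_{j+1})$ to the denominator. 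The leading $\phi(t_n)$ together with the chain of $1/\phi(t_i)$ then telescopes against the chain of $\phi(t_j)$, cancelling every amplitude except the $\phi^{\Delta}(t_i)$ in the numerator and the $\phi^{\sigma}(t_i)$ (for $i\ge 2$) in the denominator. This collapses the factor to $(-1)^n\phi^{\Delta}(t_1)\prod_{i=2}^{n}\frac{\cos_{\phi}(t_{i-1},\sigma(t_i))\,\phi^{\Delta}(t_i)}{\phi^{\sigma}(t_i)}$, which is exactly the kernel displayed in $(\ref{eq11.3})$, the bracket $\frac{1}{\phi(t_0)}\cos_{\phi}(t_n)Q(t_0+T,t_1)-\sin_{\phi}(t_n)P(t_0+T,t_1)$ being untouched by the substitution. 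I would carry out the telescoping by induction on $n$, peeling off the outermost pair $Q(t_1,t_2)$ and $h(t_1)$ at each step.

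The main obstacle is twofold. First, the index bookkeeping in the telescoping --- matching the $n-1$ numerator factors $\phi(t_j)$ against the $n$ denominator factors $\phi(t_i)$ while tracking which $\phi^{\sigma}(t_i)$ survive --- is error-prone, so I would pin it down with a clean induction and sanity-check it at $n=2$ and $n=3$. Second, and more delicate, is reconciling the kernel that the telescoping naturally produces, which carries $\cos_{\phi}(t_{i-1},\sigma(t_i))$, with the factor $\sin_{\phi}(t_{i-1},\sigma(t_i))$ written in $(\ref{eq11.3})$; to pass between the two I would use the second-argument identity $\frac{\Delta}{\Delta s}\sin_{\phi}(t,s)=-\phi(s)\cos_{\phi}(t,\sigma(s))$ (which follows from $\frac{\Delta}{\Delta s}e_{\pm i\phi}(t,s)=\mp i\phi(s)e_{\pm i\phi}(t,\sigma(s))$) together with an integration by parts in each inner variable, verifying that the boundary terms vanish or recombine. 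I would cross-validate the final expression against the classical $\mathbb{T}=\mathbb{R}$ Hill's-equation formula (where $\phi=\sqrt{q}$) to confirm whether the surviving kernel is the $\cos_{\phi}$ or the $\sin_{\phi}$ factor, since the direct specialization of Theorem~\ref{thm12.112} appears to deliver $\cos_{\phi}$.
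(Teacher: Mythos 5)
The paper itself gives no argument here (``The proof is an algebraic process, so we omit it''), and your route --- specialize Theorem \ref{thm12.112} to $p\equiv 0$, substitute $h(t_i)=-\phi^{\Delta}(t_i)/\phi(t_i)$ and $Q(t,s)=\frac{\phi(t)}{\phi^{\sigma}(s)}\cos_{\phi}(t,\sigma(s))$ from \eqref{pq}, and telescope the amplitudes --- is clearly the intended one, and your bookkeeping is right: the leading $\phi(t_n)$ cancels against $\prod_{i=1}^{n}\phi(t_i)^{-1}$ via the numerators $\phi(t_1)\cdots\phi(t_{n-1})$ coming from the $Q$-chain, leaving exactly $(-1)^{n}\phi^{\Delta}(t_1)\prod_{i=2}^{n}\frac{\cos_{\phi}(t_{i-1},\sigma(t_i))\,\phi^{\Delta}(t_i)}{\phi^{\sigma}(t_i)}$ times the untouched bracket.

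The one point to settle, which you correctly flag but leave open, is the $\sin_{\phi}$ versus $\cos_{\phi}$ discrepancy with the printed formula \eqref{eq11.3}. Do not try to bridge it by the integration-by-parts device you sketch: the second-argument derivative of $\sin_{\phi}(t,\cdot)$ produces a factor $\phi(s)$, not $\phi^{\Delta}(s)/\phi^{\sigma}(s)$, so integrating by parts in each inner variable would destroy the $\phi^{\Delta}$-structure of the kernel and generate nonvanishing boundary terms from the nested limits; it cannot convert the $\cos_{\phi}$ kernel into the $\sin_{\phi}$ kernel while leaving everything else fixed. Your cross-check against $\mathbb{T}=\mathbb{R}$ is decisive: there the $n$-th term of \eqref{eq3.21} collapses to $\int\cdots\int\cos\bigl(\Phi(t_0+T)-\Phi(t_1,t_n)\bigr)\prod_{i=2}^{n}\cos\Phi(t_{i-1},t_i)\prod_{i=1}^{n}h(t_i)$, and expanding this product of \emph{cosines} by product-to-sum formulas is precisely what yields the $\frac{1}{2^{2n-1}}\cos\Psi$ kernel of the subsequent theorem quoted from Shi. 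So the kernel delivered by the direct specialization, with $\cos_{\phi}(t_{i-1},\sigma(t_i))$, is the correct one, and the $\sin_{\phi}(t_{i-1},\sigma(t_i))$ in \eqref{eq11.3} is best regarded as a misprint (equivalently, the printed product uses $P(t_{i-1},t_i)$ where it should use $Q(t_{i-1},t_i)/\phi(t_{i-1})$). With that correction your argument is a complete proof; as printed, the identity \eqref{eq11.3} is not what the substitution gives.
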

\begin{proof}
	The proof is an algebraic process, so we omit it.
\end{proof}
\begin{thm}
	(\cite{shijinlin}) If the time scale $\mathbb{T}=\mathbb{R}$ and $\mathcal{B}=1$, then equation $(\ref{eq3.21})$ can be simplified as$$\mathcal{A}=2\cos\Phi(t_0+T)+\sum\limits_{n=1}^{\infty}\frac{1}{2^{2n-1}}\int_{t_0}^{t_0+T}\int_{t_0}^{t_1}\cdots\int_{t_0}^{t_{2n-1}}\cos\Psi(t_1,\ldots,t_{2n})\cdot\prod\limits_{i=1}^{2n}h(t_i)\mathrm{d}t_{2n}\cdots\mathrm{d}t_1, $$ where $$\displaystyle{\Phi(t)=\int_{t_0}^t\phi(\tau)\mathrm{d}\tau},\quad \displaystyle{\Phi(t,s)=\int_{s}^t\phi(\tau)\mathrm{d}\tau},$$ $$\displaystyle{\Psi(t_1,\cdots,t_{2n})=\Phi(t_0+T)-2\Phi(t_1,t_2)-2\Phi(t_3,t_4)-\cdots-2\Phi(t_{2n-1},t_{2n})}.$$
\end{thm}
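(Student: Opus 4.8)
The plan is to specialize the general expression (\ref{eq3.21}) to $\mathbb{T}=\mathbb{R}$ and then to read off $\mathcal{A}$ as the trace of a monodromy matrix whose perturbation series collapses, under the hypothesis $\mathcal{B}=1$, to only its even-order terms. First I would perform the continuous specialization. On $\mathbb{T}=\mathbb{R}$ we have $\mu\equiv 0$, $\sigma(s)=s$ and $\phi^{\sigma}=\phi$, the time-scale exponential reduces to the ordinary one, $e_{\pm i\phi}(t,s)=\exp(\pm i\Phi(t,s))$, and hence $\cos_{\phi}(t,s)=\cos\Phi(t,s)$, $\sin_{\phi}(t,s)=\sin\Phi(t,s)$, $P(t,s)=\sin\Phi(t,s)/\phi(s)$, $Q(t,s)=\phi(t)\cos\Phi(t,s)/\phi(s)$. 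Equation (\ref{phi}) becomes $\phi^{2}=q$, so $\phi=\sqrt{q}$ is $T$-periodic; this gives $\phi(t_0+T)/\phi(t_0)=1$ and turns the leading term of (\ref{eq3.21}) into $2\cos\Phi(t_0+T)$, the claimed constant. Since $\mathcal{B}=e_{-p}(t_0+T,t_0)=\exp(-\int_{t_0}^{t_0+T}p)$, the hypothesis $\mathcal{B}=1$ forces $\int_{t_0}^{t_0+T}p=0$; and because $h=-p-\phi'/\phi$ with $\phi$ periodic, $\int_{t_0}^{t_0+T}h=-\int_{t_0}^{t_0+T}p-\log\frac{\phi(t_0+T)}{\phi(t_0)}=0$. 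This vanishing mean of $h$ is the engine of the reduction.

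Next I would recast $\mathcal{A}$ as a trace. Recall $\mathcal{A}=\operatorname{tr}Y(t_0+T)$ for the monodromy of (\ref{a2}); splitting (\ref{a2}) into the unperturbed system (\ref{eqb}) plus the rank-one kick $B(t)=\left(\begin{smallmatrix}0&0\\0&h(t)\end{smallmatrix}\right)$ and applying variation of constants reproduces the Neumann series (\ref{eq3.16}). The transition matrix of (\ref{eqb}) diagonalizes: with $C(t)=\operatorname{diag}(1,\phi(t))$ and $R(\alpha)=\left(\begin{smallmatrix}\cos\alpha&\sin\alpha\\-\sin\alpha&\cos\alpha\end{smallmatrix}\right)$ one verifies $X(t)X^{-1}(s)=C(t)R(\Phi(t,s))C(s)^{-1}$, so in the eigenbasis of $R$ each rotation becomes $D(\alpha)=\operatorname{diag}(e^{i\alpha},e^{-i\alpha})$ and the kick becomes $\tfrac{h}{2}ss^{\!\top}$ with $s=(1,-1)^{\!\top}$. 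Using $C(t_0+T)=C(t_0)$ (periodicity of $\phi$) and the homomorphism property $D(\Phi(t,s))=D(\Phi(t))D(\Phi(s))^{-1}$, where now $\Phi(t)=\int_{t_0}^{t}\phi$, I would pass to the interaction picture and obtain
$$\mathcal{A}=\operatorname{tr}\Big(D(\Theta)\,\mathcal{T}\!\exp\!\int_{t_0}^{t_0+T}\!\check B(t)\,dt\Big),\qquad \check B(t)=\tfrac{h(t)}{2}I-\tfrac{h(t)}{2}\begin{pmatrix}0&e^{-2i\Phi(t)}\\ e^{2i\Phi(t)}&0\end{pmatrix}.$$
Here $\Theta=\Phi(t_0+T)$ and $\mathcal{T}\!\exp$ denotes the time-ordered exponential, whose expansion is precisely $\sum_{n}\mathcal{A}_{n}$.

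The decisive step, which is also where I expect the real work to lie, is the collapse to even order. The scalar part $\tfrac{h}{2}I$ of $\check B$ commutes with everything and therefore factors out of the ordered exponential as $\exp(\tfrac12\int_{t_0}^{t_0+T}h)=1$, exactly because $\int_{t_0}^{t_0+T}h=0$; what survives is the purely off-diagonal $\check B_{\mathrm{off}}$. A product $\check B_{\mathrm{off}}(t_1)\cdots\check B_{\mathrm{off}}(t_m)$ is diagonal when $m$ is even and off-diagonal when $m$ is odd, and since $D(\Theta)$ is diagonal its trace against an off-diagonal matrix vanishes; hence only the even orders $m=2n$ remain. For $m=2n$ the two diagonal entries of the product carry conjugate phases $\exp(\mp 2i[\Phi(t_1,t_2)+\cdots+\Phi(t_{2n-1},t_{2n})])$, so the trace with $D(\Theta)$ equals $2^{1-2n}\cos\Psi(t_1,\dots,t_{2n})\prod_{i=1}^{2n}h(t_i)$, and integrating over $t_0\le t_{2n}\le\cdots\le t_1\le t_0+T$ yields exactly the $n$-th summand. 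The points needing care are the bookkeeping of the time-ordering (so that the surviving pairing is the consecutive one $(t_{2k-1},t_{2k})$) and the interchange of trace, infinite sum and integration, which is licensed by the uniform convergence already established for the series (\ref{jishu}). A purely computational alternative — insert the continuous $P,Q$ into (\ref{eq3.21}), telescope the $\phi$-ratios into a product of $n$ cosines, and apply the product-to-sum formula — lands on the same answer, but there the vanishing of all odd-order terms and of the non-alternating sign patterns is an opaque combinatorial cancellation that the operator formulation renders transparent via $\int h=0$.
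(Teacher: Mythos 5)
Your argument is correct, and it is worth noting that the paper itself offers no proof of this statement --- it is attributed to \cite{shijinlin}, and the analogous Theorem 4.4 is dismissed with ``the proof is an algebraic process, so we omit it.'' The route implicit in the paper (and in Shi's original work) is the computational one you sketch in your last sentence: substitute the continuous forms $P(t,s)=\sin\Phi(t,s)/\phi(s)$, $Q(t,s)=\phi(t)\cos\Phi(t,s)/\phi(s)$ into \eqref{eq3.21}, telescope the $\phi$-ratios so that the $n$-th integrand becomes $\cos\bigl(\Phi(t_0+T)-\sum_{k=2}^{n}\Phi(t_{k-1},t_k)\bigr)\prod_{k=2}^{n}\cos\Phi(t_{k-1},t_k)\prod_i h(t_i)$, and then grind through product-to-sum identities, where the disappearance of the odd orders and of all but the alternating sign patterns is indeed an opaque cancellation hinging on $\int_{t_0}^{t_0+T}h=0$. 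Your operator formulation buys a clean structural explanation of exactly that cancellation: conjugating into the rotation eigenbasis turns the kick into $\tfrac{h}{2}I$ plus an off-diagonal part, the scalar piece exits the time-ordered exponential as $\exp(\tfrac12\int h)=1$ precisely because $\mathcal{B}=1$ forces $\int_{t_0}^{t_0+T}p=0$ and $\phi=\sqrt{q}$ is periodic, and the trace against the diagonal $D(\Theta)$ annihilates odd products of off-diagonal matrices, leaving the conjugate phases whose sum is $2^{1-2n}\cdot 2\cos\Psi$. All the details check out (the consecutive pairing $(t_{2k-1},t_{2k})$ does come out of the time-ordering with later times on the left, and the interchange of limits is covered by the uniform convergence already proved for \eqref{jishu}); the one cosmetic caveat is that your final series is a resummation of \eqref{eq3.21} rather than a term-by-term identification of each $\mathcal{A}_n$, which is harmless since the theorem asserts an identity for the value $\mathcal{A}$.
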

\begin{rmk}
	Theoretically, we show that this approach is also valid for critical case: the system has the same characteristic multipliers with modulus equal to one. In a similar manner, we can get an expression of $\bar{x}(t_0+T)$ in the form of a series. That is, combined with the previous discussion, the matrix $\Phi_A(t_0,t_0+T)$ also has an expression in the form of a convergent series. Note that the system we studied in critical case is stable if and only if $\Phi_A(t_0,t_0+T)-\rho I=0,$ where $\rho$ is the characteristic multipliers. Then we can get the error estimate like $(\ref{eqer1})$ and $(\ref{eqer2})$ to analyse the stability. Moreover, we see that the stability of the nonhomogeneous system $x^{\Delta\Delta}+p(t)x^{\Delta}+q(t)x=f(t)$ is equivalent to  the system $x^{\Delta\Delta}+p(t)x^{\Delta}+q(t)x=0$.
\end{rmk}
\section{Program for the algorithm}
The following Matlab program is designed for calculating the value of $\mathcal{A}(n)$ and $\mathcal{B}$ mentioned above. One can  run the following program by Matlab R2018a.\\
{\bf Program 1}
\begin{lstlisting}
   % This program was designed for calculating the value of
	% A(n) and B mentioned in this paper.

	%========================================================
	% Users should set the functions p(t), q(t) and q_diff in
	% advance in section 2 of this script, where q_diff  is
	% the derivative function of q(t) in continuous part(If
	% there is no continuous part, take q_diff=0).
	%========================================================
	% discrete part: Input the discrete points in the  form
	% of a row vector from small to large.
	%========================================================
	% continuous part: Input the ends of continuous intervals
	% in the form of a matrix, and its first and second row
	% record the left and right ends from small to large,
	% respectively.
	clc
	
	global discrete_part continuous_part time_scale;
	discrete_part=input('Enter the discrete point: ');
	continuous_part=input('Enter the continuous interval: ');
	
	if isequal(continuous_part,[])
		time_scale=discrete_part;
	else
		time_scale=sort([discrete_part,continuous_part(1,:),continuous_part(2,:)]);
	end
		B=exp_fun(@(t) -p(t)+mu(t)*q(t),time_scale(end),time_scale(1));
	
	if isequal(continuous_part,[])
		A=valueOfDelta;
		fprintf('The value of A is %f \n',A);
		fprintf('The value of B is %f \n',B);
		fprintf('The modulus of multipliers are %f %f\n',...
			abs((A-sqrt(A^2-4*B))/2),abs((A+sqrt(A^2-4*B))/2));
    elseif isequal(discrete_part,[])
        n=input('n:');
        A=Consum(n);
        fprintf(['The value of A(',num2str(n),') is %f \n'],A);
        fprintf('The value of B is %f \n',B);
        fprintf(['The ',num2str(n),'th approximate modulus are %f %f\n'],...
            abs((A-sqrt(A^2-4*B))/2),abs((A+sqrt(A^2-4*B))/2));
	else
		n=input('n:');
		A=Delta_H(n);
		fprintf(['The value of A(',num2str(n),') is %f \n'],A);
		fprintf('The value of B is %f \n',B);
		fprintf(['The ',num2str(n),'th approximate modulus are %f %f\n'],...
		abs((A-sqrt(A^2-4*B))/2),abs((A+sqrt(A^2-4*B))/2));
	end
		clear global;
	
	%%
	%Users should define the following functions:p,q,q_diff
	function f=p(t)
		if t==pi
			f=0.25;
		else
			f=0;
		end
	end
	
	function f=q(t)
		f=1;
	end
	
	%the derivative function of q(t) in continuous part
	function f=q_diff(t)
		f=0;
	end
	
	%%
	function f=mu(t)
		global discrete_part continuous_part time_scale;
		if ismember(t,discrete_part) || ismember(t,continuous_part(2,:))
			if t==time_scale(end)
				f=mu(time_scale(1));
			else
				for i=1:length(time_scale)
					if t==time_scale(i)
						f=time_scale(i+1)-time_scale(i);
					end
				end
			end
		else
			f=0;
		end
	end
	
	function f=sigma(t)
		f=t+mu(t);
	end
	
	function f=phi(t)
		global discrete_part continuous_part time_scale;
		if isequal(continuous_part,[])
			exphi=NaN(1,length(discrete_part));
			exphi(1)=1;
			for i=2:length(discrete_part)
			   exphi(i)=q(discrete_part(i-1))/exphi(i-1);
		    end
			for i=1:length(discrete_part)
			   if t==discrete_part(i)
				   f=exphi(i);
			   end
			end
			
		else
			leftends=continuous_part(1,:);rightends=continuous_part(2,:);
			if ~(ismember(t,discrete_part) || ismember(t,rightends))
			f=sqrt(q(t));
			
		elseif t<leftends(end)
			n=1;tt=t;
			while ~ismember(tt,leftends)
				n=n+1;tt=sigma(tt);
			end
				temp=NaN(1,n);temp(n)=sqrt(q(tt));k=1;
			while ~isequal(tt,time_scale(k))
				k=k+1;
			end
			for i=n-1:-1:1
				temp(i)=q(time_scale(k-n+i))./temp(i+1);
			end
				f=temp(1);
			
		elseif t==time_scale(end)
			f=phi(time_scale(1));
			
		else
			k=1;
			while ~isequal(t,time_scale(k))
				k=k+1;
			end
				n=length(time_scale)-k+1;
				temp=NaN(1,n);
				temp(n)=phi(time_scale(1));
			for i=n-1:-1:1
				temp(i)=q(time_scale(length(time_scale)-n+i))./temp(i+1);
			end
				f=temp(1);
		end
		end
	end
	
	function f=delta_int(g,t,s)
	% where g is a function handle, t and s are up and low,respectively.
		global continuous_part;ss=s;sum=0;
		if isequal(continuous_part,[])
			while ss<t
				sum=sum+mu(ss).*g(ss);
				ss=sigma(ss);
			end
		else
			rightends=continuous_part(2,:);
			while ss<t
			if ss==sigma(ss)
				k=1;
			while ss>rightends(k)
				k=k+1;
			end
			if rightends(k)>t
				sum=sum+integral(@(x) arrayfun(@(x)g(x),x),ss,t);
			else
				sum=sum+integral(@(x) arrayfun(@(x)g(x),x),ss,rightends(k));
			end
				ss=rightends(k);
		else
			sum=sum+mu(ss).*g(ss);
			ss=sigma(ss);
		end
		end
	end
	f=sum;
	end
	
	function f=cylinder_fun(g,t)
	% where g is a function handle.
	    if mu(t)==0
			f=g(t);
		else
			f=log(1+mu(t).*g(t))./mu(t);
		end
	end
	
	function f=exp_fun(g,t,s)
	% where g is a function handle, t and s are up and low,respectively.
		cylinder_g=@(t)cylinder_fun(g,t);
		f=exp(delta_int(cylinder_g,t,s));
	end
	
	function f=cos_phi(t,s)
		f=(exp_fun(@(x) phi(x).*1i,t,s)+exp_fun(@(x) -phi(x).*1i,t,s))./2;
	end
	
	function f=sin_phi(t,s)
		f=(exp_fun(@(x) phi(x).*1i,t,s)-exp_fun(@(x) -phi(x).*1i,t,s))./2i;
	end
	
	function f=P_H(t,s)
		f=(-mu(s).*phi(s).*cos_phi(t,s)+sin_phi(t,s))./(phi(sigma(s)).*...
		(1+mu(s).^2.*phi(s).^2));
	end
	
	function f=Q_H(t,s)
		f=(mu(s).*phi(s).*phi(t).*sin_phi(t,s)+phi(t).*cos_phi(t,s))./(...
		phi(sigma(s)).*(1+mu(s).^2.*phi(s).^2));
	end
	
	function f=phi_diff(t)
		if mu(t)==0
			f=q_diff(t)/(2*sqrt(q(t)));
		else
			f=(phi(sigma(t))-phi(t))/mu(t);
		end
	end
	
	%need function q_diff(t)
	function f=h_H(t)
		f=-p(t)-phi_diff(t)/phi(t);
	end
	
	function funcn=funvec(n,m)
		global time_scale;
		t_0=time_scale(1);
		T=time_scale(end)-time_scale(1);
		if n==1
			funcn= (1/phi(t_0)*cos_phi(m(n),t_0)*Q_H(t_0+T,m(1))...
			-sin_phi(m(n),t_0)*P_H(t_0+T,m(1)))*phi(m(n))*h_H(m(1));
		else
			last=1;
			for k=2:n
				last=last*Q_H(m(k-1),m(k))*h_H(m(k));
			end
				funcn=last*(1/phi(t_0)*cos_phi(m(n),t_0)*Q_H(t_0+T,m(1))...
				-sin_phi(m(n),t_0)*P_H(t_0+T,m(1)))*phi(m(n))*h_H(m(1));
		end
	end
	
	function f=Delta(n)
		global  time_scale;
		m=time_scale;
		m(end)=[];
		m=sort(m,'descend');
		M=nchoosek(m,n);
		[r,~]=size(M);
		sum=0;
		for i=1:r
			prod=1;
		for j=1:n
			prod=prod*mu(M(i,j));
		end
			sum=sum+prod*funvec(n,M(i,1:n));
		end
			f=sum;
	end
		
	function f=valueOfDelta()
		global time_scale;
		t_0=time_scale(1);
		T=time_scale(end)-time_scale(1);
		sum=(1+phi(t_0+T)/phi(t_0))*cos_phi(t_0+T,t_0);
		for i=1:(length(time_scale)-1)
			sum=sum+Delta(i);
		end
			f=sum;
	end
		
	function f = nIntergrate(fun,n)
		global time_scale;
		t0=time_scale(1);N=n;
		up=cell(1,N);
		up{1}='time_scale(end)';
		for i=2:N
			up{i}=['t',num2str(i-1)];
		end
		expr = GenerateExpr_quadl(N);
		function expr = GenerateExpr_quadl(n)
			if n == 1
				expr = ['delta_int(@(t',num2str(N),')',fun,',',up{N},',t0)'];
			else
				expr = ['delta_int(@(t',num2str(N-n+1),')',...
					GenerateExpr_quadl(n-1),',',up{N-n+1},',t0)'];
			end
		end
		f = eval(expr);
	end
	
	function f=func_ser(n)
		last=['(cos_phi(t',num2str(n),',t0)*Q_H(time_scale(end),t1)/phi(t0)',...
		'-sin_phi(t',num2str(n),',t0)*P_H(time_scale(end),t1))*phi(t',...
		num2str(n),')*h_H(t1)'];
		if n==1
			f=last;
		else
			for i=2:n
				last=[last,'*Q_H(t',num2str(i-1),',t',num2str(i),')*h_H(t',...
				num2str(i),')'];
			end
				f=last;
		end
	end
	
	function f=Delta_H(n)
		global time_scale;
		t0=time_scale(1);
		sum=(1+phi(time_scale(end))/phi(t0))*cos_phi(time_scale(end),t0);
		for i=1:n
			sum=sum+nIntergrate(func_ser(i),i);
		end
			f=sum;
    end
    function f=ConPhi(t,s)
    f=integral(@(x) arrayfun(@(x)sqrt(q(x))+0*x,x),s,t);
    end
    function f=Conh(t)
    f=-p(t)-0.5*q_diff(t)/q(t);
    end
    function f=Confun_sec(n)
    temp='ConPhi(time_scale(end),time_scale(1))';
    temp2='1';
    for i=1:2:n-1
        temp=[temp,'-2*ConPhi(x',num2str(i),',x',num2str(i+1),')'];
    end
    for j=1:n
    temp2=[temp2,'*Conh(x',num2str(j),')'];
    end
    f=['cos(',temp,')','*',temp2];
    end
    function f=Conint_fun_sec(n)
    global B;
    if B==1
     if mod(n,2)==0
        f=ConnIntergrate(Confun_sec(n),n)/(2^(n-1));
     else
        f=0;
     end
    else
        f=ConnIntergrate(Confun_sec(n),n)/(2^(n-1));
    end
    end

    function f=Consum(n)
    global time_scale;
    sum=2*cos(ConPhi(time_scale(end),time_scale(1)));
    for i=1:n
        sum=sum+Conint_fun_sec(i);
    end
    f=sum;
    end
    function f = ConnIntergrate(fun,N)
    global time_scale;
    t0=time_scale(1);
    up=cell(N);low=cell(N);x0=time_scale(end);
    for i=1:N
        low{i}=['t0+0*x',num2str(i-1)];
        up{i}=['x',num2str(i-1)];
    end

        if mod(N,2) == 0
            expr = GenerateExpr_quad2d(N);
        else
            expr = ['quadl(@(x1) arrayfun(@(x1)',GenerateExpr_quad2d(N-1),...
                ',x1),',low{1},',',up{1},')'];
        end
        function expr = GenerateExpr_quad2d(n)
            if n == 2
                expr = ['quad2d(@(x',num2str(N-1),',x',num2str(N),')',...
                    'arrayfun(@(x',num2str(N-1),',x',num2str(N),')',fun,...
                    ',x',num2str(N-1),',x',num2str(N),'),',low{N-1},',',...
                    up{N-1},',@(x',num2str(N-1),')',low{N},',@(x',...
                    num2str(N-1),')',up{N},')'];
            else
                expr = ['quad2d(@(x',num2str(N-n+1),',x',num2str(N-n+2),')',...
                    'arrayfun(@(x',num2str(N-n+1),',x',num2str(N-n+2),')',...
                    GenerateExpr_quad2d(n-2),',x',num2str(N-n+1),',x',...
                    num2str(N-n+2),'),',low{N-n+1},',',up{N-n+1},',@(x',...
                    num2str(N-n+1),')',low{N-n+2},',@(x',num2str(N-n+1),')',...
                    up{N-n+2},')'];
            end
        end
    f = eval(expr);
    end	\end{lstlisting}
     \section{Examples}
\begin{example}(Discrete Time Scale)
	Consider the time scale $\mathbb{T}=\mathbb{Z}$ and the regressive equation \begin{equation}
		\label{eqexam1}
		\Delta\Delta x(t)+\frac{-17+15(-1)^t}{16}\Delta x(t)+\frac{1-15(-1)^t}{16}x(t)=0,
	\end{equation}which can be rewritten as\begin{equation}
		\label{eqexam1.2}\Delta X(t)=\left(\begin{matrix}
			0&1\\&\\
			\displaystyle{-\frac{1-15(-1)^t}{16}}&\displaystyle{-\frac{-17+15(-1)^t}{16}}
		\end{matrix}\right)X(t).	\end{equation}
		Let $A(t)=\left(\begin{matrix}
			0&1\\
			-q(t)&-p(t)
		\end{matrix}\right)=\left(\begin{matrix}
			0&1\\
			-\frac{1-15(-1)^t}{16}&-\frac{-17+15(-1)^t}{16}
		\end{matrix}\right).$ \end{example}
Obviously, the time scale $\mathbb{Z}$ and matrix $A(t)$ have periods of $2$. Also, it can be verified that $\mathcal{B}=e_{-p+\mu q}(2,0)=1$ and then we are going to use formula $(\ref{eq3.21})$ to calculate the value of $\mathcal{A}.$ Taking $$\phi(0)=1,\phi(1)=-\frac{7}{8},\phi(2)=-\frac{8}{7},$$ then we have$$\begin{array}{cccc}
	\cos_{\phi}(0)=1,&\sin_{\phi}(0)=0,\qquad &\cos_{\phi}(1)=1,&\sin_{\phi}(1)=1,\vspace{1ex}\\\displaystyle{\cos_{\phi}(2)=\frac{15}{8}},&\displaystyle{\sin_{\phi}(2)=\frac{1}{8}},\qquad&\displaystyle{\cos_{\phi}(2,1)=1},&\displaystyle{\sin_{\phi}(2,1)=-\frac{7}{8}},\vspace{1ex}\\\displaystyle{P(1,0)=0},&\displaystyle{Q(1,0)=1},\qquad &\displaystyle{P(2,0)=1},&\displaystyle{Q(2,0)=\frac{64}{49}},\vspace{1ex}\\P(2,1)=0,&Q(2,1)=1,\qquad &h(0)=2,&\displaystyle{h(1)=\frac{83}{49}},
\end{array}$$ and
\begin{equation*}
\begin{split}
	\mathcal{A} =&\displaystyle{\left(1+\dfrac{\phi(t_0+T)}{\phi(t_0)}\right)\cos_{\phi}(t_0+T)+\int_0^2\left(\cos_{\phi}(t_1)Q(2,t_1)-\sin_{\phi}(t_1)P(2,t_1)\right)\cdot\phi(t_1)h(t_1)}\Delta t_1\\&+\displaystyle{\int_0^2\int_0^{t_1}\left(\cos_{\phi}(t_2)Q(2,t_1)-\sin_{\phi}(t_2)P(2,t_1)\right)\cdot\phi(t_2)Q(t_1,t_2)h(t_1)h(t_2)}\Delta t_2\Delta t_1\\=&\displaystyle{-\frac{15}{56}+\frac{128}{49}-\frac{7}{8}\cdot\frac{83}{49}+\frac{166}{49}=\frac{17}{4}.}
\end{split}
	\end{equation*}

Now we calculate the value of $\mathcal{A}$ using $(\ref{eqdab})$. It can be seen that the transition matrix of system $(\ref{eqexam1.2})$ is given by\begin{equation}
			\label{eqexam1.3}\Phi_A(t,0)=\left(\begin{matrix}
				\displaystyle{2^t-2^t\int_0^t\frac{5+3(-1)^s}{2^{2s+3}}\Delta s}&\displaystyle{2^t\int_0^t\frac{5+3(-1)^s}{2^{2s+3}}\Delta s}\\&\\
				\displaystyle{2^t-2^t\int_0^t\frac{5+3(-1)^s}{2^{2s+3}}\Delta s-\frac{5+3(-1)^t}{2^{t+3}}}&\displaystyle{2^t\int_0^t\frac{5+3(-1)^s}{2^{2s+3}}\Delta s+\frac{5+3(-1)^t}{2^{t+3}}}
			\end{matrix}\right).
		\end{equation}Then we can obtain that $\mathcal{A}=trace(\Phi_A(2,0))=\frac{17}{4}$, which is consistent with the previous calculations, and we get system $(\ref{eqexam1})$ is unstable. We also can use Program 1 given in Section 5 to calculate:
		\begin{lstlisting}
Enter the discrete point: [0,1,2]
Enter the continuous interval: []
The value of A is 4.250000
The value of B is 1.000000
The modulus of multipliers are 0.250000 4.000000.
		\end{lstlisting}
		\begin{example}
			(Discrete Time Scale) Consider the time scale $\mathbb{T}=2\mathbb{Z}$ and the regressive equation
	\begin{equation}\label{eq20216301}
		\displaystyle{x^{\Delta\Delta}(t)+\frac{\sin\frac{\pi}{3}t+2}{10}x^{\Delta}(t)+\frac{\sin\frac{\pi}{3}t+2}{20}x(t)}=0.
	\end{equation}\end{example}
		  Obviously, the time scale $2\mathbb{Z}$ and the functions $p(t),q(t)$ have periods of $6$. Also, it can be verified that $\mathcal{B}=e_{-p+\mu q}(6,0)=1$. Then we use Program 1 to calculate:
		\begin{lstlisting}
Enter the discrete point: [0,2,4,6]
Enter the continuous interval: []
The value of A is -0.752000
The value of B is 1.000000
The modulus of multipliers are 1.000000 1.000000.
		\end{lstlisting}
	Now we calculate the value of $\mathcal{A}$ using $(\ref{eqdab})$. Let $x_1(t),x_2(t)$ be solutions of $(\ref{eq20216301})$ satisfying$$x_1(0)=1,\quad x_1^{\Delta}(0)=0,\quad x_2(0)=0,\quad x_2^{\Delta}(0)=1.$$ Then we have
	\begin{equation*}
		\begin{array}{lll}
			\displaystyle{x^{\Delta\Delta}_1(0)=-\frac{1}{10},}&\displaystyle{x_1(2)=1,}& \displaystyle{x^{\Delta}_1(2)=-\frac{1}{5},}\vspace{1ex}
			\\\displaystyle{x^{\Delta\Delta}_1(2)=-\frac{3\sqrt{3}-12}{200},}&
			\displaystyle{x_1(4)=\frac{3}{5},}&\displaystyle{x^{\Delta}_1(4)=-\frac{3\sqrt{3}+32}{100},}\vspace{1ex} \\\displaystyle{x_1(6)=-\frac{3\sqrt{3}+2}{50},}&\displaystyle{x^{\Delta\Delta}_2(0)=-\frac{1}{5},}&\displaystyle{x_2(2)=2,}\vspace{1ex}\\\displaystyle{x^{\Delta}_2(2)=\frac{3}{5},}&\displaystyle{x^{\Delta\Delta}_2(2)=-\frac{2\sqrt{3}+8}{25},}&\displaystyle{x_2(4)=\frac{16}{5},}\vspace{1ex}\\\displaystyle{x_2^{\Delta}(4)=-\frac{4\sqrt{3}+1}{25},}&\displaystyle{x_2^{\Delta\Delta}(4)=\frac{55\sqrt{3}-168}{500},}&\displaystyle{x_2^{\Delta}(6)=\frac{15\sqrt{3}-178}{250}.}
		\end{array}
	\end{equation*}
	Thus, $\mathcal{A}=x_1(6)+x_2^{\Delta}(6)=-0.752,$  which is consistent with the previous calculations and we get system $(\ref{eq20216301})$ is stable.

 \begin{example}(Hybrid Time Scale)
	Consider the time scale $\mathbb{T}=[2k\pi,(2k+1)\pi],k\in\mathbb{Z}$ and the regressive equation \begin{equation}\label{eq20216291}
		x^{\Delta\Delta}(t)+p(t)x^{\Delta}(t)+x(t)=0,
	\end{equation}
	 where $$p(t)=\left\{\begin{array}{ll}
		0,& t\in[2k\pi,(2k+1)\pi),\\
		\frac{1}{4},& t=(2k+1)\pi.
	\end{array}\right.$$\end{example}
Obviously, $q(t)=1$ and the time scale $\mathbb{T}$ and the function $p(t)$ have periods of $2\pi$. Also, it can be verified that $\mathcal{B}=e_{-p+\mu q}(2\pi,0)=\pi^2-\frac{\pi}{4}+1$ and then we are going to use formula $(\ref{eq3.21})$ to calculate the value of $\mathcal{A}.$ It can be seen that $\phi(t)=1$ for all $t\in\mathbb{T}$ and $$h(t)=-p(t)-\frac{\phi^{\Delta}(t)}{\phi(t)}=\left\{\begin{array}{ll}
	0,& t\in[2k\pi,(2k+1)\pi),\\
		-\frac{1}{4},& t=(2k+1)\pi.
\end{array}\right.$$Note that $h(t)=0$ for all $t\in[0,\pi)$, then the expression of $\mathcal{A}$ given by $(\ref{eq3.21})$ can be reduced to\begin{equation}
\begin{array}{ccl}

	\mathcal{A} &=&2\cos_1(2\pi)+\displaystyle{\int_{\pi}^{2\pi}\left(\cos_1(t_1)Q(2\pi,t_1)-\sin_1(t_1)P(2\pi,t_1)\right)\cdot}h(t_1)\Delta t_1\\ &=&2\cos_1(2\pi)+\displaystyle{\mu(\pi)\cdot\left(\cos_1(\pi)Q(2\pi,\pi)-\sin_1(\pi)P(2\pi,\pi)\right)\cdot}h(\pi)\\&=&-2+\pi\cdot(-1-0)\cdot(-\frac{1}{4})=\frac{\pi}{4}-2.
\end{array}
	\end{equation}
	Now we calculate the value of $\mathcal{A}$ using $(\ref{eqdab})$. Let $x_1(t),x_2(t)$ be solutions of $(\ref{eq20216291})$ satisfying$$x_1(0)=1,\quad x_1^{\Delta}(0)=0,\quad x_2(0)=0,\quad x_2^{\Delta}(0)=1.$$ For any $t\in[0,\pi]$, we have $$x_1(t)=\cos t\quad and \quad x_2(t)=\sin t.$$Hence, we get $x_1^{\Delta}(\pi)=0$, $x_2^{\Delta}(\pi)=-1$ and$$x_2^{\Delta\Delta}(\pi)=\frac{x_2^{\Delta}(2\pi)-x_2^{\Delta}(\pi)}{\pi}=-p(\pi)x_2^{\Delta}(\pi)-x_2(\pi).$$Thus, $x_1(2\pi)=-1$, $x_2^{\Delta}(2\pi)=\frac{\pi}{4}-1.$ Finally, we have$$\mathcal{A}=x_1(2\pi)+x_2^{\Delta}(2\pi)=\frac{\pi}{4}-2,$$ which is consistent with the previous calculations and system $(\ref{eq20216291})$ is unstable. We also can use Program 1 given in Section 5 to calculate $\mathcal{A}(n)$ given by $(\ref{eq3.19})$:
\begin{lstlisting}
Enter the discrete point: [2*pi]
Enter the continuous interval: [0;pi]
n:1
The value of A(1) is -1.214602
The value of B is 10.084206
The 1th approximate modulus are 3.175564 3.175564.
\end{lstlisting}
\begin{example}
	(Continuous Time Scale) Consider the time scale $\mathbb{T}=\mathbb{R}$ and the equation \begin{equation}\label{eq9.18}
		x'(t)+\frac{1}{2}\sin(2t)x'(t)+\frac{1}{4}x(t)=0.
	\end{equation}
\end{example}
We can use Program 1 to calculate $\mathcal{A}(n)$:
\begin{lstlisting}
Enter the discrete point: []
Enter the continuous interval: [0;pi]
n:3
The value of A(3) is -0.065450
The value of B is 1.000000
The 3th approximate modulus are 1.000000 1.000000.
\end{lstlisting}
Now we estimate the value of $|\mathcal{A}(3)-\mathcal{A}|$ by $(\ref{eqer2})$. A straightforward calculation leads to $$|\mathcal{A}(3)-\mathcal{A}|\leq e^{\frac{\pi}{2}}-\left(1+\frac{\pi}{2}+\frac{(\frac{\pi}{2})^2}{2}+\frac{(\frac{\pi}{2})^3}{6}\right)\approx 0.360016406528039.$$ It is clear that $0<\mathcal{A}<0.5$. It follows from Theorem \ref{rmkab2} that system $(\ref{eq9.18})$ is stable.
\begin{example}(\cite{tableshi})
	Consider Mathieu equation\begin{equation}
		\label{eqmath}x''+(\lambda-h\cos2t)x=0.
	\end{equation}
\end{example}Book \cite{book2} gets the approximate values of some eigenvalue of $(\ref{eqmath})$ as follows:\par\begin{table}[!h]\centering
	\begin{tabular}{lllll}
		\hline
		h&$\lambda$ & & &\\
		 \cline{2-5}
		 & $\lambda_1$&$\lambda_2$&$\lambda_1'$&$\lambda_2'$\\
		 \hline
		 1&3.979&4.101&9.014&9.018\\
		 2&3.917&4.371&9.047&9.078\\
		 3&3.814&4.747&9.093&9.193\\
		 \hline \vspace{-4pt}
	\end{tabular}
\end{table}
	Let $\mathcal{A}[\lambda_i]$ and $\mathcal{A}[\lambda_i']$ be the value of $\mathcal{A}$ of $(\ref{eqmath})$ as $\lambda=\lambda_i$ and $\lambda=\lambda_i'$, respectively. It is well known that $\mathcal{A}[\lambda_i]\approx 2$ and $\mathcal{A}[\lambda_i']\approx -2.$ Now we are going to calculate the 3-th approximate value of $\mathcal{A}[\lambda_i]$ and $\mathcal{A}[\lambda_i']$ by Program 1 and the results are shown in Table \ref{table1}.\par\begin{table}[!h]\caption{3-th approximate value of $\mathcal{A}$}\label{table1}\centering
	\begin{tabular}{lr@{}l}
		\hline
		Equation& \multicolumn{2}{c}{3-th approximate value of $\mathcal{A}$}\\
		\hline
		$x''+(3.979-\cos2t)x=0$& &2.000049 \\
		$x''+(4.101-\cos2t)x=0$& &2.000044 \\
		$x''+(9.014-\cos2t)x=0$& -&2.000001 \\
		$x''+(9.018-\cos2t)x=0$& -&2.000000 \\
		&\\
		$x''+(3.917-2\cos2t)x=0$& &2.000798 \\
		$x''+(4.371-2\cos2t)x=0$& &2.000384 \\
		$x''+(9.047-2\cos2t)x=0$& -&2.000009 \\
		$x''+(9.078-2\cos2t)x=0$& -&2.000018 \\
		&\\
		$x''+(3.814-3\cos2t)x=0$& &1.998646 \\
		$x''+(4.747-3\cos2t)x=0$& &1.998733 \\
		$x''+(9.093-3\cos2t)x=0$& -&2.000103 \\
		$x''+(9.193-3\cos2t)x=0$& -&2.000093 \\
		\hline\end{tabular}\vspace{-4pt} \end{table}
\newpage
 \section{Declarations section}

 \subsection{Ethical Approval}  Not Applicable.

 \subsection{Availability of supporting data}
 No data was used for the research in this article.

 \subsection{Funding}
     This paper was jointly supported from the National Natural
Science Foundation of China under Grant (No. 11931016, 11671176).%, the Natural Science Foundation of Zhejiang Province under Grant (No. LY20A010016).

\subsection{Competing interests}
The authors declare that they have no conflict of interest.

\subsection{Authors' contributions}
Yonghui Xia conceived of the study, supervision, participated in the computations and proof, wrote the manuscript text.  Mengda Wu carried out the program and participated in the computations and proof. Ziyi Xu participated in the program.

\subsection{Acknowledgement}
Not Applicable.

\end{document}